        \title{The Farrell-Hsiang method revisited}
       \author{Bartels, A.}
       \address{Westf\"alische Wilhelms-Universit\"at M\"unster\\
               Mathematisches Institut\\
               Einsteinstr.~62,
               D-48149 M\"unster, Germany}
        \email{bartelsa@math.uni-muenster.de}
        \urladdr{http://www.math.uni-muenster.de/u/bartelsa} 
        \author{L\"uck, W.}
        \address{Mathematisches Institut der Universit\"at Bonn\\
                Endenicher Allee 60\\
                53115 Bonn, Germany}
         \email{wolfgang.lueck@him.uni-bonn.de}
         \urladdr{http://www.him.uni-bonn.de/lueck}
         \date{July 2011}
     \keywords{Farrell-Jones Conjecture, $K$- and $L$-theory of group rings.}
    \subjclass{18F25, 19A31, 19B28, 19G24}
  \DeclareMathAlphabet{\matheurm}{U}{eur}{m}{n}
  \newcommand{\IN}{\mathbb{N}}
  \newcommand{\IZ}{\mathbb{Z}}
  \newcommand{\cala}{\mathcal{A}}
  \newcommand{\calb}{\mathcal{B}}
  \newcommand{\calf}{\mathcal{F}}
  \newcommand{\calg}{\mathcal{G}}
  \newcommand{\calh}{\mathcal{H}}
  \newcommand{\calo}{\mathcal{O}}
  \newcommand{\bfK}{{\mathbf K}}
  \newcommand{\bfL}{{\mathbf L}}
  \newcounter{commentcounter}
  \newcommand{\ingreen}[1]{\textcolor{green}{#1}}
  \theoremstyle{plain}
  \newtheorem{theorem}{Theorem}[section]
  \newtheorem{lemma}[theorem]{Lemma}
  \newtheorem{proposition}[theorem]{Proposition}
  \theoremstyle{definition}
  \newtheorem{definition}[theorem]{Definition}
  \newtheorem*{definition*}{Definition}
  \theoremstyle{remark}
  \newtheorem{remark}[theorem]{Remark}
\let\c@equation=\c@theorem\makeatother
  \DeclareMathOperator{\Hom}{Hom}
  \DeclareMathOperator{\id}{id}
  \DeclareMathOperator{\ind}{ind}
  \DeclareMathOperator{\mor}{mor}
  \DeclareMathOperator{\res}{res}
  \DeclareMathOperator{\tr}{tr}
  \DeclareMathOperator{\Sw}{Sw}
  \DeclareMathOperator{\GL}{GL}
  \DeclareMathOperator{\GW}{GW}
  \DeclareMathOperator{\VCyc}{VCyc}
  \newcommand{\modules}{{\matheurm{mod}}}
  \newcommand{\x}{{\times}}
  \newcommand{\ox}{{\otimes}}
  \newcommand{\ol}[1]{{\overline{#1}}}
  \newcommand{\wt}[1]{{\widetilde{#1}}}
  \newcommand{\einsu}{[1,\infty)} 
\begin{document}

  \maketitle

  \begin{abstract}
    We present a sufficient condition for groups
    to satisfy the Farrell-Jones Conjecture in algebraic $K$-theory
    and $L$-theory.
    The condition is formulated in terms of finite quotients of
    the group in question and is motivated by work of Farrell-Hsiang. 
  \end{abstract}

  \ingreen{This version is different from the published version.
    A number of typos and an incorrect formula for the
    transfer before Lemma~\ref{lem:tilde-tr} pointed out by Holger Reich
    have been corrected.}


\typeout{------------------- Introduction -----------------}

  \section*{Introduction}
  
  Farrell-Hsiang used in~\cite{Farrell-Hsiang(1978b)} a beautiful 
  combination of
  controlled topology and Frobenius induction to prove that the 
  Whitehead group
  of fundamental groups of compact flat Riemannian manifolds is trivial.  
  This general method has been refined and used further by Farrell-Hsiang,
  Farrell-Jones and Quinn, see for
  example~\cite{Farrell-Hsiang(1981b),Farrell-Hsiang(1983),
    Farrell-Jones(1988b),Quinn(2005)}.  
  These results belong to a much wider
  collection of results that ultimately led to the Farrell-Jones 
  Conjecture~\cite{Farrell-Jones(1993a)} that predicts a  formula for
  $K$- and $L$-theory of group rings $RG$.
  This formula describes these groups in terms of group homology
  and $K$- and $L$-theory of group rings $RV$, where $V$
  varies over the family $\VCyc$ of virtually cyclic subgroups of $G$.
  Often it is useful to consider a variant of the Conjecture
  where $\VCyc$ is replaced by a larger families of subgroups. 
  For more information about the
  Farrell-Jones Conjecture and its applications we refer
  to~\cite{Bartels-Lueck-Reich(2008appl), Lueck-Reich(2005)}.

  The present paper gives an axiomatic treatment of the 
  Farrell-Hsiang method
  leading us to the definition of Farrell-Hsiang groups below.  
  More generally we define a group to be a Farrell-Hsiang 
  group with respect to a given family
  of subgroups $\calf$, more or less if the Farrell-Hsiang method is 
  applicable relative to $\calf$.  
  Our main result states that the
  Farrell-Jones Conjecture holds for these groups relative to the 
  family $\calf$.  
  In the most important case
  $\calf$ is the family $\VCyc$ of virtually cyclic subgroups
  or a family of groups for which the Farrell-Jones Conjecture relative to 
  $\VCyc$ is known.
  In this case our
  result implies that if a group $G$ is a Farrell-Hsiang group relative to
  $\calf$, then $G$ satisfies both the $K$- and $L$-theoretic 
  Farrell-Jones Conjecture with coefficients in additive categories. 
  Our main result here is used in  work with Tom Farrell 
  to prove the Farrell-Jones Conjecture for virtually poly-cylic 
  groups~\cite{Bartels-Farrell-Lueck(2011)}.
  We give a very brief  overview of this application in
  an Appendix where we also discuss examples of
  Farrell-Hsiang groups.

  In~\cite{Farrell-Jones(1987)} Farrell-Jones used a wonderful
  combination of controlled topology and
  the dynamics of the geodesic flow on negatively curved manifolds
  to prove that the Whitehead group of the fundamental group of
  such manifolds vanishes.
  This Farrell-Jones method has also been refined and further
  used in many papers about the Farrell-Jones conjecture and
  the Borel conjecture, 
  see for example~\cite{Farrell-Jones(1993a), Farrell-Jones(1993c)}.
  In~\cite{Bartels-Lueck-Reich(2008hyper), Bartels-Lueck(2009borelhyp)}
  an axiomatic treatment for this method is given 
  that is from a formal point of view very similar to our treatment here.
  In both cases a transfer and a contracting map are the main ingredients.
  The main difference is, that the transfer in the Farrell-Hsiang
  method uses a finite discrete fiber and its construction 
  depends on Frobenius
  induction, whereas in the Farrell-Jones method the fiber is a compact 
  contractible space and the transfer is essentially 
  given by the tensor product with the singular chain complex of this fiber.
  Also, in applications the construction of the contracting maps 
  is very different.
  In the first case subgroups of finite but large index are exploited,
  in the second case the dynamic of flow spaces is a key ingredient.   

  \subsection*{Acknowledgements}
  The first author thanks Frank Quinn for a long email exchange 
  about the Farrell-Hsiang
  method.  
  This paper was supported by the SFB 878 \emph{Groups, Geometry and Actions}
  and by the Leibniz-award of the second author.


\typeout{-------------- Section 1: Farrell-Hsiang groups  -----------------}

  \section{Farrell-Hsiang groups}

  A finite group $H$ is said to be \emph{hyperelementary}
  if it can be written as an extension 
  $1 \to C \to H \to P \to 1$, where $C$ is a cyclic group
  and $P$ is a $p$-group for some prime $p$.

  \begin{definition}[Farrell-Hsiang group]
    \label{def:Farrell-Hsiang}
    Let $\calf$ be a family of subgroups of the finitely generated group $G$.
    We call $G$ a \emph{Farrell-Hsiang group} with respect to the family $\calf$
    if the following holds for a fixed word metric $d_G$: 
    
    There exists a natural  number $N$ such that for every natural 
    number $n$ there is a surjective homomorphism $\alpha_n
    \colon G \to F_n$ with $F_n$ a finite group such that the 
    following condition is
    satisfied. For any hyperelementary subgroup $H$ of $F_n$ we set $\ol{H} :=
    \alpha_n^{-1}(H)$ and require that there exists a simplicial complex $E_H$ of
    dimension at most $N$ with a cell preserving simplicial $\ol{H}$-action
    whose stabilizers belong to $\calf$, and an $\ol{H}$-equivariant map $f_H
    \colon G \to E_H$ such that $d_G(g,h) < n$ implies $d_{E_H}^1 (f_H(g),f_H(h)) <
    \frac{1}{n}$ for all $g,h \in G$, where $d^1_{E_H}$ is the $l^1$-metric on $E_H$.
  \end{definition}

  \begin{theorem}[Main Theorem]
    \label{the:FH_implies_FJ}
    Let $G$ be a Farrell-Hsiang group with respect to the 
    family $\calf$ in the sense of Definition~\ref{def:Farrell-Hsiang}.
    Then $G$ satisfies the $K$-theoretic and $L$-theoretic 
    Farrell-Jones Conjecture with additive categories as coefficients with
    respect to the family $\calf$.
  \end{theorem}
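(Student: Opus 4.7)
The plan is to establish an axiomatic geometric criterion for the Farrell-Jones Conjecture relative to $\calf$ and then verify it by constructing, for each $n$, a controlled transfer built from the local data in Definition~\ref{def:Farrell-Hsiang} via Frobenius induction. The target criterion, analogous to the transfer-reducibility framework used for the Farrell-Jones method (see the discussion preceding the statement), roughly says: $G$ satisfies the $K$-theoretic and $\Li$-theoretic Farrell-Jones Conjecture with respect to $\calf$ provided that for every $n$ one produces an endofunctor on a category of controlled $RG$-modules together with a $G$-equivariant map to a simplicial complex of dimension at most $N$ with $\calf$-stabilizers, such that the composite has metric control going to $0$ as $n \to \infty$. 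Once this is verified, an Eilenberg-swindle style argument forces the assembly map with respect to $\calf$ to be an isomorphism.

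Since the hypothesis only supplies contracting maps $f_H \colon G \to E_H$ for each hyperelementary subgroup $H$ of $F_n$, the central step is to assemble these local fragments into one transfer on $G$. For this I would invoke Swan's theorem applied to the Swan ring $\Sw(F_n,\IZ)$: the trivial $\IZ[F_n]$-module is a $\IZ$-linear combination $\sum_H \ind_H^{F_n}[M_H]$ of inductions of lattices over hyperelementary subgroups. Pulling back along $\alpha_n$, this exhibits the trivial $RG$-module as a virtual sum of the $RG$-modules $R[G]\otimes_{R[\ol H]} M_H$. The $H$-summand of the transfer sends a controlled $RG$-complex $C$ to $C \otimes_\IZ M_H$ regarded as an $R\ol H$-complex and then transports its support along $f_H$ to the complex $E_H$; because $E_H$ has dimension $\le N$ and $\ol{H}$-stabilizers in $\calf$, the target geometry is exactly what the criterion demands. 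The Swan identity guarantees that the direct sum of these transfers induces the identity on $K$-theory and $\Li$-theory of $RG$ up to a unit of $\Sw(F_n,\IZ)$, which acts by an isomorphism. The required metric control is automatic from the hypothesis $d_G(g,h) < n \Rightarrow d^1_{E_H}(f_H(g),f_H(h)) < \tfrac{1}{n}$, which shows that $n$-bounded complexes upstairs become $\tfrac{1}{n}$-controlled in $E_H$.

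The main obstacle will be executing the transfer construction cleanly on the level of controlled additive categories, especially for $\Li$-theory, where the modules $M_H$ must be equipped with compatible anti-structures so that tensoring preserves quadratic forms and the decoration $\langle -\infty\rangle$. One must verify that the Swan identity lifts from a $K_0$-level relation to a chain-homotopy equivalence between the sum of transfers and the identity functor, with error terms that are absorbed by an Eilenberg-swindle over $[1,\infty)$. The discrete, finite nature of the Farrell-Hsiang fiber $M_H$ is what keeps this tractable: unlike the continuous transfers of the Farrell-Jones method, here the transfer is modeled on a finite free $\IZ$-module so that duality, change of rings, and induction compose in a purely algebraic fashion, and the geometric input enters only through the contractions $f_H$.
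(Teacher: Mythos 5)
Your overall strategy---reduce to a controlled-category criterion, build the transfer by hyperelementary induction pulled back along $\alpha_n$, and feed in the contracting maps $f_H$ for control---is the same as the paper's. But there is a genuine gap exactly at the step you treat as routine: the passage from ``for every $n$ there is a transfer whose image is $\tfrac{1}{n}$-controlled over an $N$-dimensional complex with $\calf$-stabilizers'' to the conclusion that the assembly map relative to $\calf$ is an isomorphism. A class in $K_*$ or $L_*$ of the obstruction category $\calo^G(E_\calf G,G,d_G)$ which admits, for each $n$ \emph{separately}, an arbitrarily well-controlled representative need not vanish, and no Eilenberg swindle over $\einsu$ repairs this. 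What the paper does instead is lift a given class \emph{simultaneously} for all $n$ to the controlled product category $\calo^G(E,(S_n,d_{S_n})_{n \in \IN})$ (Theorem~\ref{thm:onto}); this forces the transfers to preserve $R$-control with $R$ independent of $n$, which is why the transfer is refined to $\wt{\tr}_{\alpha_n}$ taking values over $G \x G/\ol{H}$ with a discrete extra factor (Lemma~\ref{lem:tilde-tr}) rather than just ``transporting support along $f_H$''. Then the maps $f_H$ assemble into a single functor $F$ to $\calo^G(E,(X_n,d_{X_n})_{n\in\IN})$ with uniform control, where the scaling $n\cdot d^1$ in $d_{X_n}$ is played off against the $\tfrac{1}{n}$-contraction (Proposition~\ref{prop:F}), and the vanishing for large $k$ comes from the comparison of the direct sum with the controlled product, i.e.\ the fact that the functor $I$ in diagram~\eqref{eq:main-diagram} induces isomorphisms in $K$- and $L$-theory (\cite[Theorem~7.2]{Bartels-Lueck-Reich(2008hyper)}). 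This uniformity-over-all-$n$ mechanism is the missing idea; without it, the ``axiomatic criterion'' you want to invoke is precisely what remains to be proved.

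Two further points. The way you propose to exploit the induction identity---lifting it to a chain-homotopy equivalence between the sum of transfers and the identity functor, with error terms absorbed by a swindle---is neither needed nor workable as stated; the paper instead shows the transfer functor is exact (short exact sequences of lattices become split exact in the obstruction category because a $\IZ$-linear splitting can be inserted pointwise and equivariantly assembled, Lemma~\ref{lem:tr-is-exact}), so its action on higher $K$-theory depends only on the class in $\Sw(\IZ,G)$ (Proposition~\ref{prop:swan-acts-on-B}); since $1_{\Sw}$ is realized exactly, no ``unit of the Swan ring'' intervenes. Moreover, for $L$-theory Swan's theorem does not suffice: one needs Dress's hyperelementary induction theorem for the equivariant Witt group $\GW(\IZ,-)$ (Theorem~\ref{thm:Swan_and_Dress}, \cite{Dress(1975)}), writing $1_{\GW}$ as a sum of inductions of symmetric forms $(T_H^{\pm},\varphi_H^{\pm})$; these forms are what make the transferred functors compatible with the involutions, and their $0$-control lets them pass to the product category (Proposition~\ref{prop:onto}). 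This resolves the ``anti-structure'' issue you flag but leave open.
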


   For the precise formulation and discussion of the Farrell-Jones Conjecture with coefficients in additive categories
   we refer to~\cite{Bartels-Lueck(2009coeff)}. 

  \begin{remark}
    Definition~\ref{def:Farrell-Hsiang} can be weakened if one is only 
    interested in the $L$-theoretic Farrell-Jones conjecture.
    In this case it suffices to consider all subgroups $H$ of $F$
    that are either $2$-hyperelementary or $p$-elementary for some 
    prime $p \neq 2$.
    In other words $p$-hyperelementary subgroups that are not
    $p$-elementary can be ignored for all odd primes $p$.
  \end{remark}


\typeout{-------------- Section 2: Categorical preliminaries   -------------}
  
  \section{Categorical preliminaries}
     \label{sec:cat-prelim}
      
     \subsection{Additive $G$-categories with involutions.}
     In this paper we will understand notions like additive category (with
     involution) or additive $G$-category (with involution) 
     always in the strict sense. 
     This means that all our additive categories will come with a
     strictly associative functorial direct sum $(M,N) \mapsto M \oplus N$ 
     and an involution $I$ on an additive category $\calb$ is a 
     contravariant functor $I \colon \calb \to \calb$
     with $I^2 = \id_\calb$. 
     When we talk about an additive $G$-category, the
     (right) $G$-action is understood to be in the strict 
     sense, i.e., for every
     $g \in G$ we have a functor $R_g \colon \calb \to \calb$ of additive
     categories such that $R_h \circ R_g = R_{gh}$ for $g,h \in
     G$.  
     If $\calb$ comes with an
     involution $I_{\calb}$, then we require 
     $I_{\calb} \circ R_g = R_g \circ I_{\calb}$ for all 
     $g \in G$.
 
  \begin{remark}
    Often a more general definition of additive categories with
    involutions is used, where the equality $I^2 = \id_\calb$ 
    is replaced by a natural equivalence $E \colon I^2 \to \id_\calb$.
    One may also consider additive categories with weak $G$-actions.
    We refer to~\cite{Bartels-Lueck(2009coeff)}, where all these notion are explained,
    it is shown how one can replace the weak versions by equivalent strict versions, and
    \--- most important \--- that for a proof of the Farrell-Jones Conjecture it suffices
    to consider the strict versions (see~\cite[Theorem~0.2]{Bartels-Lueck(2009coeff)}). 
    We use the strict versions to simplify some formulas. 
    The only slight disadvantage of this is, that it forces us to replace
    some very natural categories by some slightly less natural 
    categories, see for instance the definition of $\modules_\IZ$ below. 
  \end{remark}

  A functor between additive categories with involutions $(\calb,I)$
  and $(\calb',I')$ is a pair $(F,E)$ where 
  $F \colon \calb \to \calb'$ is an additive  functor,
  and $E \colon F \circ I \to I' \circ F$ is a natural
  equivalence such that
  $I'(E(M)) = E(I(M))$ for all objects $M \in \calb$.
  If $F \circ I = I' \circ F$ and $E = \id$, then
  the functor is said to be strict.
  Most of our functors will be strict, but not all
  of them. 
  Functors between additive categories with involutions induce
  maps in $L$-theory.


  \subsection{The category $\modules_\IZ$ of based
     finitely generated free abelian groups.}
  On the category of finitely generated free abelian groups the 
  involution $T \mapsto T^* := \Hom_{\IZ}(-,\IZ)$ is not
  strict since $T$ is not $(T^*)^*$ on the nose.
  To fix this inconvenience we will consider the following additive 
  category with involution $\modules_\IZ$ instead. 
  The objects of $\modules_\IZ$ are $\IZ^n$, $n=0,1,2,\dots$.
  The set of morphisms
  $\mor_{\modules_\IZ}(\IZ^n,\IZ^m)$ is given by $n \x m$-matrices. 
  Composition is given by the usual matrix multiplication. 
  The direct sum is given by $\IZ^n \oplus \IZ^m = \IZ^{n + m}$. 
  The involution on $\modules_\IZ$ acts as
  the identity on objects and as transposition of matrices on
  morphisms. 
  For an additive category $\cala$ there is a functor  
  \begin{equation*}
    - \ox_\IZ - \colon \modules_\IZ \x \cala \to \cala
  \end{equation*}
  defined by $\IZ^n \ox_\IZ M = \bigoplus_{i=1}^n M$,
  see for example~\cite[Section~6]{Bartels-Lueck(2009borelhyp)}.
  This functor is bilinear on morphisms groups.
  It follows that given
  an object $\IZ^n$ in $\modules_\IZ$, the functor 
  $\IZ^n \ox_\IZ - \colon \cala \to \cala$ is a functor of 
  additive categories, and given an object $M \in \cala$, the
  functor $- \ox_\IZ M \colon \modules_\IZ \to \cala$ is a functor of
  additive categories.
  If $\cala$ comes with an involution, 
  then $\modules_\IZ \x \cala$ inherits the
  obvious product involution and $- \ox_\IZ -$ is compatible with the
  involutions.

  
  \subsection{The category $\modules_{(\IZ,G)}$ of 
      $\IZ G$-modules
      which are  finitely generated free as abelian groups.}
     \label{subsec:mod_(Z,G)}

  Let $G$ be a group. 
  We define the following additive category with involution 
  $\modules_{(\IZ,G)}$. 
  Objects in $\modules_{(\IZ,G)}$ are pairs $(\IZ^n, \rho)$ where  
  $\rho \colon G \to \GL(n,\IZ)$ is a group homomorphism.
  A morphism $f \colon (\IZ^n,\rho) \to (\IZ^m,\eta)$ is a morphism
  $f \colon \IZ^n \to \IZ^m$ in $\modules_{\IZ}$ 
  which is compatible with the homomorphisms $\rho$ and $\eta$, 
  i.e., $\eta(g) \circ f = f \circ \rho(g)$ 
  for all $g \in G$. 
  The direct sum is given by the direct sum in $\modules_{\IZ}$.
  Define an involution $I_{\modules_{(\IZ,G)}}$ on 
  $\modules_{(\IZ,G)}$ as follows. 
  It sends an object $(\IZ^n,\rho)$ to the object $(\IZ^n,\rho^*)$, where
  $\rho^*(g)$ is defined by $I_{\modules_{\IZ}}(\rho(g^{-1}))$. 
  A morphism $f \colon (\IZ^n,\rho) \to (\IZ^m,\eta)$ 
  is sent to the morphism given by 
  $I_{\modules_{\IZ}}(f)$.

  Of course $\modules_{(\IZ,G)}$ is a model for the category of 
  $\IZ G$-modules 
  which are finitely generated free as abelian groups and 
  has the extra feature that the involution is strict. 

  Let $\alpha \colon H \to G$ be a group homomorphism. 
  We obtain a functor
  of additive categories with involution called \emph{restriction}
  \begin{eqnarray*}
  & \res_{\alpha} \colon \modules_{(\IZ,G)} \to \modules_{(\IZ,H)}, &
  \end{eqnarray*}
  which sends an object $(\IZ^n,\rho)$ to the object 
  $(\IZ^n, \rho \circ \alpha)$
  and a morphism $f \colon (\IZ^n,\rho) \to (\IZ^m,\eta)$ to the morphism 
  $f \colon (\IZ^n ,\rho \circ \alpha) \to (\IZ^m, \eta \circ \alpha)$.

  Next we define the induction functor
  for a subgroup $H$ of $G$ of finite index
  \begin{eqnarray*}
    & \ind_H^G \colon \modules_{(\IZ,H)} \to \modules_{(\IZ,G)}. &
  \end{eqnarray*}
  It will depend on a choice of 
  representatives $g_0,\dots,g_{m-1} \in G$ for 
  $G/H = \{g_0\overline{H}, \ldots, g_{m-1}\overline{H}\}$.
  This choice will not matter in the sequel, since for two
  such choices we obtain a unique natural equivalence of the corresponding
  functors of additive categories with involution. 
  Consider an object
  $(\IZ^n,\rho)$ in $\modules_{(\IZ,H)}$. 
  The image under $\ind_H^G$ is the
  object $(\IZ^{m \cdot n}, \eta)$, where 
  $\eta(g) \in \GL(m \cdot n,\IZ)$ for $g \in G$ is the morphism in
  $\modules_\IZ$ given by the matrix whose entry at 
  $(kn+i,k'n+i')$ is $0$ if
  $g g_{k'} H \not= g_{k} H,$ and is 
  $\rho \bigl(g_{k}^{-1}g g_{k'} \bigr)_{i,i'}$
  if $g g_{k'} H = g_{k}H$. 
  Here $0 \leq k,k' \leq {m-1}$, $1 \leq i,i' \leq n$.

  \begin{remark} \label{rem:induction}
    Our above definition of $\ind_H^G$ may appear unnatural.
    But the only reason for this is our choice of the category
    $\modules_{(\IZ,H)}$; 
    it really is the usual definition of induction: 

    Let $(\IZ^n,\rho)$ be an object of $\modules_{(\IZ,H)}$.
    Then $\IZ^n$ becomes an $\IZ[H]$-module via $\rho$.
    We have the following isomorphism of $\IZ$-modules
    \begin{equation*}
      \IZ[G] \ox_{\IZ[H]} \IZ^n 
        \cong \bigoplus_{j=0}^{m-1} \IZ[g_jH] \ox_{\IZ[H]} \IZ^n
        \cong \bigoplus_{j=0}^{m-1} \IZ^n
        \cong \IZ^{m \cdot n}
    \end{equation*}
    and the above formula for $\eta$
    describes how the action of $G$ on $\IZ[G] \ox_{\IZ[H]} \IZ^n$
    conjugates to an action on $\IZ^{m \cdot n}$ under the above
    isomorphism.
  \end{remark}


\typeout{-------------- Section 3: The obstruction category   -------------}

  \section{The obstruction 
         category $\calo^G(E,Z,d;\cala)$}
    \label{sec:calo}
  
  Let $E$ be a $G$-space and $(Z,d)$ be a quasi-metric space
  with a free, proper and isometric $G$-action.
  In this section we will review the 
  the additive category $\calo^G(E,Z,d;\cala)$ that was 
  originally defined in~\cite[Section~3]{Bartels-Lueck-Reich(2008hyper)},
  see also~\cite[Section~4]{Bartels-Lueck(2009borelhyp)}. 
  If $\cala$ is an additive category with involution, 
  then $\calo^G(E,Z,d;\cala)$ is an additive category with
  involution.
  
  \subsection{Objects.}
  Objects in $\calo^G(E,Z,d;\cala)$ are given by
  sequences $M = (M_{y})_{y \in Z \x E \x \einsu}$ of 
  objects from $\cala$ subject to the following conditions.
  \begin{enumerate}
  \item \emph{$G$-compact support over $Z \x E$.}
        There is a compact subset $K$ of $Z \x E$ such that
        $M_{z,e,t} = 0$ whenever $(z,e) \not\in G \cdot K$.
  \item \emph{Locally finiteness.}
        For all $y \in Z \x E \x \einsu$ 
        there exists an open neighborhood
        $U$ such that
        $\{ y \in U \mid M_{y} \neq 0 \}$
        is finite.
  \item \emph{$G$-equivariance.} 
        For all $y \in Z \x E \x \einsu$ and $g \in G$
        we have $M_{gy} = g(M_{y})$.
        Here $gy = (gz,ge,t)$ for $y = (z,e,t)$.   
  \end{enumerate}
  The involution $I_\calo$ on $\calo^G(E,Z,d_G;\cala)$ acts on objects
  point-wise, i.e., we have
  $(I_\calo(M))_{z,e,t} = I_\cala(M_{z,e,t})$. 

  \subsection{Morphisms.}
  Let $M = (M_{y})_{y \in Z \x E \x \einsu}$, $N = (M_y)_{y \in Z \x E \x \einsu}$ 
  be objects from $\calo^G(E,Z,d)$. A morphism $\psi \colon M \to N$
  in $\calo^G(E,Z,d)$ is given by a sequence 
  $\psi = (\psi_{y,y'})_{y,y' \in Z \x E \x \einsu}$ of morphisms
  $\psi_{y,y'} \colon M_{y'} \to N_y$ in $\cala$ subject to
  the following conditions.
  \begin{enumerate}
  \item \emph{Row and column finiteness.}
        For all $y \in Z \x E \x \einsu$ the set
        $\{ y' \mid \psi_{y,y'} \neq 0  \; \text{or} \; 
             \psi_{y',y} \neq 0 \}$ is finite.
  \item \emph{Metric control over $Z$.}
        There is $R > 0$ (depending on $\psi$) such
        that $\psi_{y,y'} = 0$ whenever
        $y = (z,e,t)$, $y' = (z',e',t')$ with
        $d(z,z') > R$.
  \item \emph{Metric control over $\einsu$.}
        There is $A > 0$ (depending on $\psi$) such
        that $\psi_{y,y'} = 0$ whenever
        $y = (z,e,t)$, $y' = (z',e',t')$ with
        $| t - t'| > A$.
  \item \emph{$G$-continuous control over $E \x \einsu$.}
        Let $e_0 \in E$, $V$ be an $G_{e_0}$-invariant 
        neighborhood of $e_0$ and $b > 0$. 
        (Here $G_{e_0} = \{ g \mid ge_0=e_0 \}$.)
        Then we require the existence of $B > 0$ and 
        a $G_{e_0}$-invariant neighborhood $U$ of $e_0$ such that
        $\psi_{y,y'} = \psi_{y',y} = 0$ whenever
        $y = (z,e,t)$, $y' = (z',e',t')$ with
        $(e,t) \in U \x (B,\infty)$ and
        $(e',t') \not\in V \x (b,\infty)$.
  \item \emph{$G$-equivariance.}
        For all $y,y' \in Z \x E \x \einsu$ and $g \in G$
        we have $\psi_{gy,gy'} = g(\psi_{y,y'})$.
  \end{enumerate}
  For the constructions in this paper the second condition will
  be the most important condition and we will say that $\psi$ is
  \emph{$R$-controlled} if it is satisfied for a given $R > 0$.
  Addition and composition of morphisms is defined as for matrices:
  $(\psi + \psi')_{y,y'} = \psi_y + \psi'_{y'}$ and
  $(\psi \circ \psi')_{y,y''} = \sum_{y'} \psi_{y,y'} \circ \psi_{y',y''}$.
  The involution is on morphisms defined by the formula
  $(I_\calo(\psi))_{y,y'} = I_\cala(\psi_{y',y})$.
  
  We will often drop $\cala$ from the notation and write
  $\calo^G(E,Z,d)$ instead of \linebreak $\calo^G(E,Z,d;\cala)$.

  \subsection{Functoriality.}
  In this paper we will only need the functoriality of 
  $\calo^G(E,Z,d;\cala)$ in the $Z$-variable.
  Let  $(Z,d)$ and $(Z',d')$ be quasi-metric spaces
  with free, proper and isometric $G$-actions.
  Let $f \colon Z \to Z'$ be $G$-equivariant continuous map
  such that for any $r > 0$ there is $R > 0$ such that
  $d'(f(z_0),f(z_1)) < R$ whenever $d(z_0,z_1) < r$.
  Then $f$ induces a functor $f_* \colon \calo^G(E,Z,d) \to \calo^G(E,Z',d')$
  which is given by $(f_*(M))_{e,z,t} = \bigoplus_{z' \in f^{-1}(z)}
  M_{e,z',t}$.  (The condition ensures that metric control over $Z$ is
  turned into metric control over $Z'$; the $G$-compact support 
  condition for objects ensures that the sum in the
  definition of $f_*$ is finite.)  
  Strictly speaking $f_*$ is only defined up to natural
  equivalence because the direct sum may only be defined up to canonical
  isomorphism.
  (Our assumptions on $\cala$ only imply that 
  sums over \emph{ordered} finite index set
  are canonically defined.)
   

  \subsection{$\calo^G(E,G,d)$ as the obstruction to
       the Farrell-Jones conjecture.}

  The following result is a consequence 
  of~\cite[Theorem~5.2]{Bartels-Lueck(2009borelhyp)}. 

  \begin{theorem}
     \label{thm:calo-and-FJ}
    Let $G$ be a finitely generated group, $d_G$ a word metric on $G$ 
    and $\calf$ be a family of subgroups. 
    \begin{enumerate}
    \item Assume that $K_*(\calo^G(E_\calf G, G, d_G))$ is trivial
       in all degrees.
       Then the $K$-theory assembly map
       $H^G_*(E_\calf G; \bfK_\cala) \to K_*(\int_G \cala)$
       is an isomorphism.
    \item Assume that $L_*(\calo^G(E_\calf G, G, d_G))$ is trivial
       in all degrees.
       Then the $L$-theory assembly map
       $H^G_*(E_\calf G; \bfL_\cala) \to L_*(\int_G \cala)$
       is an isomorphism.  
     \end{enumerate}
  \end{theorem}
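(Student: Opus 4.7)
The plan is to obtain both statements by specializing the general cofibration sequence of Bartels--L\"uck \cite[Theorem~5.2]{Bartels-Lueck(2009borelhyp)} to our choice of data $E = E_\calf G$, $Z = G$, $d = d_G$. In that reference one constructs, for a group $G$ acting on suitable $E$ and $Z$, a homotopy cofibration sequence of (non-connective) $K$-theory spectra (and a parallel $L$-theory version) of the shape
\[
  H^G(E_\calf G; \bfK_\cala) \longrightarrow K(\textstyle\int_G \cala)
    \longrightarrow K\bigl(\calo^G(E_\calf G, G, d_G; \cala)\bigr),
\]
in which the first map is identified with the Farrell--Jones assembly map. Given this, if the third term has trivial homotopy in all degrees, then the long exact sequence of homotopy groups forces the assembly map to be an isomorphism. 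The $L$-theoretic statement follows by the same argument applied to the $L$-theory version of the sequence, which uses that $\calo^G(E_\calf G, G, d_G; \cala)$ is an additive category with involution whenever $\cala$ is.

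The execution would proceed in three steps. First, I would record precisely the input hypotheses of Theorem~5.2, namely that $(Z,d)$ is a free proper isometric $G$-space and $E$ is a $G$-CW complex; in our situation $Z = G$ with the word metric (on which $G$ acts freely, properly, and isometrically by left translation) and $E = E_\calf G$ satisfy these. Second, I would match the obstruction category appearing in the cofibration of \cite{Bartels-Lueck(2009borelhyp)} with the category defined in Section~3 here; since the definitions are literally the same (objects with $G$-compact support over $Z \times E$, morphisms with metric control in $Z$, metric control in $\einsu$, and $G$-continuous control over $E \times \einsu$), this match is tautological. Third, I would invoke the cofibration sequence and read off the desired conclusions from the long exact sequences in $K$- and $L$-theory.

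The only subtle point, and the step I would expect to be the main obstacle, is verifying that the identification of the first term of the cofibration sequence with $H^G_*(E_\calf G; \bfK_\cala)$ (and its $L$-analogue) really is the one coming from the Farrell--Jones assembly map --- i.e., that the natural map from the fiber term to the category $\int_G \cala$ of $G$-objects over $\cala$ assembles to the assembly map, not some twisted variant. This is established in \cite{Bartels-Lueck(2009borelhyp)} by relating the continuously controlled setup to the homology theory $H^G(-;\bfK_\cala)$ via a comparison with categories of $G$-equivariant objects supported near the fiber $\{t = 1\}$, but for our purposes it suffices to quote the identification, since no further hypotheses on $\calf$ or on $G$ intervene beyond those already assumed. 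With these matches in place, Theorem~\ref{thm:calo-and-FJ} follows immediately.
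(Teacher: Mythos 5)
Your proposal is correct and takes essentially the same route as the paper: the paper proves Theorem~\ref{thm:calo-and-FJ} simply by quoting \cite[Theorem~5.2]{Bartels-Lueck(2009borelhyp)}, and your argument merely spells out how that cited result (the identification of the obstruction category with the cofiber of the assembly map and the resulting long exact sequences in $K$- and $L$-theory) specializes to $E = E_\calf G$, $Z = G$, $d = d_G$.
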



  \subsection{The controlled product category.}

  Let $(Z_n,d_n)$ be a sequence of quasi-metric spaces
  with free, proper and isometric $G$-actions.
  Consider the product category $\prod_{n \in \IN} \calo^G(E,Z_n,d_n)$.
  A morphism $\varphi = (\varphi_n)_{n \in \IN}$ is said to be $R$-controlled
  for $R > 0$ is $\varphi_n$ is $R$-controlled for all $n$.
  We define $\calo^G(E,(Z_n,d_n)_{n \in \IN})$ as the category whose
  objects are objects from the product category and whose morphisms
  are morphisms from the product category that are $R$-controlled for
  some $R$. 
  There is for any $k$ a canonical projection functor
  $\calo^G(E,(Z_n,d_n)_{n \in \IN}) \to \calo^G(E,Z_k,d_k)$. 


\typeout{----- Section 4: The Core of the proof of the main Theorem --------}  

  \section{The Core of the proof of the main 
                         Theorem~\ref{the:FH_implies_FJ}}
    \label{sec:core}

  Let $G$ be a Farrell-Hsiang group with respect to $\calf$.
  Let $N$ be the number appearing in Definition~\ref{def:Farrell-Hsiang}.
  For $n \in \IN$ there is then $\alpha_n \colon G \to F_n$, a 
  surjective group homomorphism onto a finite group $F_n$,
  such that the following holds:
  For any hyperelementary subgroup $H$ of $F_n$ and 
  $\ol{H} := {\alpha_n}^{-1}(H)$ there is a
  simplicial complex $E_H$ of dimension at most $N$ with a cell
  preserving simplicial $\ol{H}$-action whose 
  stabilizers belong to $\calf$, and an $\ol{H}$-equivariant map 
  $f_H \colon G \to E_H$ such that
  $d_G(g,h) < n$ implies $d_{E_H}^{1} (f_H(g),f_H(h)) < \frac{1}{n}$ 
  for all $g,h \in G$,  where $d^{1}_{E_H}$ is the $l^1$-metric on $E$.
  Here we write $\ol{H}$ for ${\alpha_n}^{-1}(H)$ and we will
  use this convention throughout the remainder of this paper.
  We denote by $\calh_n$ the family of hyperelementary subgroups of $F_n$.
  We set $X_n := G \x \coprod_{H \in \calh_n} \ind_{\ol{H}}^G E_H$
  and $S_n := G \x  \coprod_{H \in \calh_n} G/\ol{H}$. 
  We equip $X_n$ and $S_n$ with diagonal $G$-action.
  We will use the quasi-metrics $d_{X_n}$ on $X_n$ 
  and $d_{S_n}$ on $S_n$ defined by
  \begin{eqnarray*}
    d_{X_n} ( (g,x), (h,y) ) & := & d_G(g,h) + 
                      n \cdot d^1_{\ind^G_{\ol{H}}E_H}(x,y), \\
    d_{S_n} ( (g,a\ol{H}), (h,b\ol{K}) ) & := & 
                 \begin{cases} 
                    d_G(g, h) & \text{if} \; K=H \; \text{and} \; 
                        a \ol{H}=b \ol{K}, \\
                    \infty & \text{otherwise}. 
                 \end{cases}
  \end{eqnarray*}
  Here $g, h, a, b \in G$, $x, y \in X_n$, $H, K \in \calh_n$
  and $d^1_{\ind^G_{\ol{H}} E_H}$ is the $l^1$-metric on $\ind^G_{\ol{H}} E_H$.
  Abbreviate $E := E_\calf G$.
  The proof of Theorem~\ref{the:FH_implies_FJ} is organized
  around the following diagram of additive categories and functors.
  \begin{equation}
    \label{eq:main-diagram}
    \xymatrix{& & 
       \bigoplus_{n \in \IN} \calo^G(E, X_n, d_{X_n}) 
       \ar[d]^{I}
       \\
       \calo^G(E, (S_n, d_{S_n})_{n \in \IN})
       \ar[d]^{P_k} \ar[rr]^{F} 
       & &
       \calo^G(E, (X_n, d_{X_n})_{n \in \IN}) 
       \ar[d]^{Q_k}
       \\
       \calo^G(E, G, d_G)
       \ar[rr]^{\id}
       & &
       \calo^G(E, G, d_G)
    } 
  \end{equation}
  Explanations follow.
  The functors ${P}_k$ and ${Q}_k$ are defined as compositions
  \begin{eqnarray*}
    \calo^G(E, (S_n, d_{S_n})_{n \in \IN}) \to & 
    \calo^G(E, S_k, d_{S_k}) \to &
    \calo^G(E, G, d_G) 
    \\
    \calo^G(E, (X_n, d_{X_n})_{n \in \IN}) \to & 
    \calo^G(E, X_k, d_{X_k}) \to &
    \calo^G(E, G, d_G) 
  \end{eqnarray*}
  where in both cases the first functor is the 
  projection on the k-th factor,
  and the second functor is induced by the canonical projection 
  $p_k \colon S_k = G \x \coprod_{H \in \calh_k} G/\ol{H} \to G$ and
  $q_k \colon X_k = G \x \coprod_{H \in \calh_k} \ind_{\ol{H}}^G E_H \to G$
  respectively.  
  The functor $I$ is the canonical inclusion.
  The functor  $F$ will be constructed in Proposition~\ref{prop:F}.
  We have the following facts.
  \begin{enumerate}
  \item For all $a \in K_*(\calo^G(E,G,d_G))$ and 
     $b \in L_*(\calo^G(E,G,d_G))$ 
     there are $\hat a \in K_*(\calo^G(E,(S_n,d_{S_n})_{n \in \IN}))$ 
     and $\hat b \in L_*(\calo^G(E,(S_n,d_{S_n})_{n \in \IN}))$ 
     such that for all $k$ we have $\big( K_n(P_k) \big)(\hat a) = a$
     and $\big( L_*(Q_k) \big)(\hat b) = b$. 
     This will be proved in Theorem~\ref{thm:onto}.
  \item For all $k$ we have $Q_k \circ F = P_k$,
     see Proposition~\ref{prop:F}.
  \item The functor $I$ induces an isomorphism in $K$- and $L$-theory.
     For $K$-theory this follows 
     from~\cite[Theorem~7.2]{Bartels-Lueck-Reich(2008hyper)}.
     This result only depends on the properties of $K$-theory
     listed in~\cite[Theorem~5.1]{Bartels-Lueck(2009borelhyp)}.
     Since these properties are also enjoyed by $L$-theory,
     $I$ induces an isomorphism in $L$-theory as well.
  \end{enumerate}

  \begin{proof}[Proof of Theorem~\ref{the:FH_implies_FJ}]
     Because of Theorem~\ref{thm:calo-and-FJ} it suffices 
     to show that the $K$- and $L$-theory of $\calo^G(E,G,d_G)$
     is trivial.
     Let $a \in K_n(\calo^G(E,G,d_G))$ and
     $b \in L_n(\calo^G(E,G,d_G))$.
     By the first fact there are 
     $\hat a \in K_n(\calo^G(E,(S_n,d_{S_n})_{n \in \IN}))$ 
     and $\hat b \in L_n(\calo^G(E,(S_n,d_{S_n})_{n \in \IN}))$ 
     such that for all $k$ we have $\big( K_n(P_k) \big)(\hat a) = a$
     and $\big( L_n(P_k) \big)(\hat b) = b$.
     It is a consequence of the third fact that for sufficient large
     $k$ we have 
     $\big( K_n(Q_k \circ F) \big)(\hat a) = 0$
     and $\big( L^{-\infty}_n(Q_k \circ F) \big)(\hat b) = 0$.
     Using the second fact we conclude
     $a = \big( K_n(P_k) \big)(\hat a) = 
        \big( K_n(Q_k) \circ F \big)(\hat a) = 0$ and
     $b = \big( L_n(P_k) \big)(\hat b) =
      \big( L_n(Q_k \circ F) \big)(\hat b) = 0$.
     (Compare~\cite[p.45, Proof of Theorem~1.1]
         {Bartels-Lueck-Reich(2008hyper)}.) 
  \end{proof}


\typeout{----- Section 5: Abstract transfers for additive categories --------}

  \section{Abstract transfers for additive categories}
    \label{sec:abstr-transfer}


  \subsection{Swan group and Dress' equivariant Witt group}

  We have introduced the additive category with involutions $\modules_{(\IZ,G)}$
  in Section~\ref{sec:cat-prelim}. Recall that it is equivalent to the category
  of $\IZ G$-modules which are finitely generated free as $\IZ$-modules.
  We will use the exact structure on $\modules_{(\IZ,G)}$ where a sequence is
  called exact if it is exact as a sequence of $\IZ[G]$-modules (or equivalently
  as a sequence of abelian groups).  
  Notice that with this exact structure not all exact
  sequences are split exact over $\IZ G$.  
  The \emph{Swan group} and \emph{Dress' equivariant Witt group} 
  are defined with respect to this exact structure as corresponding Grothendieck 
  or Witt groups
  \begin{equation*}
     \Sw(\IZ,G) := G_0(\modules_{(\IZ,G)}) 
     \quad \quad \text{and} \quad
     \GW(\IZ,G) := W(\modules_{(\IZ,G)}), 
  \end{equation*}
  see~\cite{Swan(1960a), Dress(1975), Lueck-Ranicki(1988)}.
  Both of these become rings via the tensor product over $\IZ$,
  equipped with the diagonal $G$-action, but we will not need 
  this ring structure and ignore it in this paper.
  By $1_{\Sw} \in \Sw(\IZ,G)$ we will denote the class of
  $\IZ$  with the trivial  $G$-action 
  and analogously for $1_{\GW} \in \GW(\IZ,G)$. 
  (These are of course the units for the ring structures.)
  
  For a group homomorphism $\alpha \colon H \to G$ 
  there are restriction maps
  \begin{eqnarray*}
       \res_\alpha \colon \Sw(\IZ, G) \to \Sw(\IZ,H) 
       \\
       \res_\alpha \colon \GW(\IZ, G) \to \GW(\IZ,H)
  \end{eqnarray*}
  coming from the restriction functor 
  $\res_\alpha \colon \modules_{(\IZ,G)} \to \modules_{(\IZ,H)}$.
  Clearly, we have $\res_\alpha(1_{\Sw}) = 1_{\Sw}$ and
  $\res_\alpha(1_{\GW}) = 1_{\GW}$.
   
  For a subgroup $H \subseteq G$ of finite index there are induction homomorphisms
  \begin{eqnarray*}
       \ind_H^G \colon \Sw(\IZ,H) \to \Sw(\IZ,G)  
       \\
       \ind_H^G \colon \GW(\IZ, H) \to \GW(\IZ,G)
  \end{eqnarray*}
  coming from the induction functor 
  $\ind_H^G \colon \modules_{(\IZ,H)} \to \modules_{(\IZ,G)}$.

  Actually, both $\Sw(\IZ,-)$ and $\GW(\IZ,-)$ are Green functors.
  Later on we will make crucial use of the following results
  due to Swan and Dress.

  \begin{theorem}[Swan~\cite{Swan(1960a)};Dress\cite{Dress(1975)}]
    \label{thm:Swan_and_Dress}
    Let $F$ be a finite group. 
    Let $\calh$  be the family of hyperelementary subgroups
    of $F$.
    \begin{enumerate}
    \item \label{thm:Swan_and_Dress:Swan}
       There are $\tau_H \in \Sw(\IZ,H)$, $H \in \calh$
       such that 
       \begin{equation*}
          1_{\Sw} = \sum_{H \in \calh} \ind^F_H(\tau_H) \in \Sw(\IZ,F).
       \end{equation*}
    \item \label{thm:Swan_and_Dress:Dress} 
       There are $\sigma_H \in \GW(\IZ,H)$, $H \in \calh$
       such that 
       \begin{equation*}
          1_{\GW} = \sum_{H \in \calh} \ind^F_H(\sigma_H) \in \GW(\IZ,F).
       \end{equation*}
    \end{enumerate}
  \end{theorem}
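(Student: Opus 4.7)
My plan is to deduce both parts uniformly from the standard Green functor induction machinery of Dress.

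First I would verify that $M := \Sw(\IZ,-)$ and $M := \GW(\IZ,-)$ are Green functors on the lattice of subgroups of $F$. Concretely: both are Mackey functors (restriction, induction and conjugation satisfy the usual naturality and the double coset formula, for $\GW$ with the extra bookkeeping of the involution), both carry a commutative ring structure given by $\ox_\IZ$ with diagonal action, and both satisfy Frobenius reciprocity
\[
\ind_H^K\bigl(x \cdot \res^K_H(y)\bigr) \;=\; \ind_H^K(x) \cdot y
\qquad \text{for } H \le K \le F.
\]
Frobenius reciprocity is the categorical form of the natural $\IZ$-linear (and, in the $\GW$-case, form-preserving) isomorphism $\IZ K \ox_{\IZ H}(A \ox_\IZ \res B) \cong (\IZ K \ox_{\IZ H} A) \ox_\IZ B$ and is routine to check from the explicit description of $\ind_H^G$ given in Section~\ref{subsec:mod_(Z,G)}. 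The units $1_{\Sw}$ and $1_{\GW}$ are of course the trivial representation.

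The key formal observation is then that
\[
I_\calh(F) := \sum_{H \in \calh} \ind_H^F\bigl(M(H)\bigr) \;\subseteq\; M(F)
\]
is an \emph{ideal}: indeed, for $\ind_H^F(x) \in I_\calh(F)$ and $y \in M(F)$, Frobenius reciprocity gives $\ind_H^F(x) \cdot y = \ind_H^F\bigl(x \cdot \res_H^F y\bigr) \in I_\calh(F)$. Hence it suffices to show $1_M \in I_\calh(F)$, because the decomposition $1_M = \sum_H \ind_H^F(\tau_H)$ (respectively with $\sigma_H$) then follows tautologically.

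Finally I would prove $1_M \in I_\calh(F)$ by the classical two-step localization argument of Swan and Dress. Rationally, Artin's induction theorem yields a formula $1 = \sum_C \ind_C^F(x_C)$ in $M(F) \ox \IQ$ with $C$ ranging over cyclic subgroups, which are in particular hyperelementary. Locally at each prime $p$, a Brauer-type argument produces $1 \in I_\calh(F) \ox \IZ_{(p)}$ using only $p$-hyperelementary subgroups. Combining the rational and the $p$-local decompositions via $\IZ = \IQ \cap \bigcap_p \IZ_{(p)}$ inside $\IQ$ yields an integral decomposition. The main obstacle is the $p$-local step for $\GW$: one has to verify that induction preserves non-degeneracy of symmetric $\IZ$-bilinear forms and that the Brauer-style character decompositions lift from characters to honest forms. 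This is precisely the technical core of~\cite{Dress(1975)}; for $\Sw$ the corresponding input is Swan's original induction theorem in~\cite{Swan(1960a)}, and in practice I would reduce the present argument to citing those two theorems once the Green functor structure and the ideal property have been established.
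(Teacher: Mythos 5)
The paper offers no proof of this theorem at all --- it is quoted as a result of Swan and Dress with citations to \cite{Swan(1960a)} and \cite{Dress(1975)} --- and your proposal, after assembling the standard Green-functor formalism (Frobenius reciprocity, the ideal $I_\calh(F)=\sum_{H\in\calh}\ind_H^F(M(H))$, reduction to $1_M\in I_\calh(F)$), explicitly defers the substantive $p$-local induction statements to those same two references, so in substance you are taking the paper's approach with correct but standard scaffolding added. One small caveat: the separate rational Artin step is redundant and the gluing via ``$\IZ=\IQ\cap\bigcap_p\IZ_{(p)}$'' is not quite the right formulation --- the clean argument is that the class of $1_M$ in the abelian group $M(F)/I_\calh(F)$ is annihilated, for each prime $p$, by some integer prime to $p$, hence by a set of integers with gcd $1$, hence vanishes --- but this is an expository slip, not a gap.
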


  \begin{remark} \label{rem:not-all-hyperelem}
    In Theorem~\ref{thm:Swan_and_Dress}~\ref{thm:Swan_and_Dress:Dress}
    the family $\calh$ can be replaced by the family of subgroups $H$ of $F$
    that are either $2$-elementary or $p$-hyperelementary for some 
    prime $p \neq 2$.
  \end{remark}


  \subsection{Action of $\Sw(\IZ,G)$ in $K$-theory.}
  Let $R$ be a ring and $G$ be a group.  Denote by $\modules_{R[G]}$ the
  category of finitely generated projective $R[G]$-modules.  
  The tensor product over $\IZ$, equipped with
  the diagonal $G$-action, $(T,M) \mapsto T \ox_\IZ^\Delta M$ defines a bilinear
  functor
  \begin{equation*}
     - \ox_\IZ^\Delta - \colon 
       \modules_{(\IZ,G)} \x \modules_{R[G]} \to \modules_{R[G]}.
  \end{equation*}
  In particular, 
  we obtain a functor 
  $T \ox_\IZ^\Delta - \colon \modules_{R[G]} \to \modules_{R[G]}$
  for every  module $T \in \modules_{(\IZ,G)}$.
  Applying $K$-theory we obtain a endomorphism 
  $K_*(T \ox_\IZ^\Delta -)$ of $K_*(R[G])$,
  This endomorphism depends only on the class of $T$ in
  $\Sw(\IZ,G)$ and yields  a pairing
  \begin{equation*}
    \mu \colon \Sw(\IZ,G) \ox K_*(R[G]) \to K_*(R[G])
  \end{equation*}
  such that $\mu( [T] \ox a) = K_*( T \ox_\IZ^\Delta -)(a)$ 
  for all $a \in K_*(R[G])$ (see~\cite[Corollary~1 on page~106]{Quillen(1973)}).
  This has  a generalization as follows.
  For an additive category $\calb$ a functor   
  \begin{equation*}
      F \colon \modules_{(\IZ,G)} \x \calb \to \calb   
  \end{equation*}
  is said to be \emph{exact} if $F$ is bilinear and for any 
  short exact sequence
  (which is \emph{not} necessarily split exact) $0 \to S_0 \xrightarrow{i} S_1
  \xrightarrow{p} S_2 \to 0$ in $\modules_{(\IZ,G)}$ and any object $B$ in
  $\calb$ the induced sequence 
  $0 \to F(S_0,B) \xrightarrow{F(i,\id_B)} F(S_1,B)
  \xrightarrow{F(p,\id_B)} F(S_2,B) \to 0$ in $\modules_{(\IZ,G)}$ is exact in
  $\calb$. Recall that a sequence $0 \to B_0 \xrightarrow{j} B_1 \xrightarrow{q}
  B_2 \to 0$ in an additive category $\calb$ is called exact if it is split 
  exact, i.e, $q \circ j = 0$  and there exists a morphism 
  $s \colon B_2 \to B_0$ such that
  $q \circ s = \id_{B_2}$ and $j \oplus s \colon B_0 \oplus B_2 \to B_1$ is an
  isomorphism.

  \begin{proposition}
    \label{prop:swan-acts-on-B}
    Given an exact functor
    $F \colon \modules_{(\IZ,G)} \x \calb \to \calb$,
    there is a bilinear pairing  
    \begin{equation*}
      \mu_F \colon \Sw(\IZ,G) \ox K_*(\calb) \to K_*(\calb)
    \end{equation*}
    such that $\mu_F( [T] \ox a) = K_*(F(T,-))(a)$ for all 
    $a \in K_*(\calb)$.
  \end{proposition}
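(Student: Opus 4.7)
The plan is to mimic Quillen's construction of the $\Sw(\IZ,G)$-action on $K_*(R[G])$, upgrading it to an arbitrary additive target $\calb$ by a direct application of the additivity theorem. For each fixed $T \in \modules_{(\IZ,G)}$ the bilinearity of $F$ makes $F(T,-) \colon \calb \to \calb$ an additive endofunctor, so it induces a homomorphism $K_*(F(T,-)) \colon K_*(\calb) \to K_*(\calb)$. I would define
\[
  \mu_F([T] \ox a) := K_*(F(T,-))(a)
\]
and the content of the proof is then to check that this is well-defined on the Grothendieck group $\Sw(\IZ,G) = G_0(\modules_{(\IZ,G)})$ and bilinear.

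The key step is additivity in $T$. Let $0 \to S_0 \to S_1 \to S_2 \to 0$ be a short exact sequence in $\modules_{(\IZ,G)}$ (not necessarily $\IZ G$-split). The exactness hypothesis on $F$ gives, for every $B \in \calb$, a split exact sequence
\[
  0 \to F(S_0,B) \to F(S_1,B) \to F(S_2,B) \to 0
\]
in $\calb$, and these assemble into a levelwise split exact sequence of endofunctors $F(S_0,-) \to F(S_1,-) \to F(S_2,-)$. By the additivity theorem for the $K$-theory of additive categories (Quillen's additivity theorem, applied to $\calb$ with the split exact structure that underlies the definition of $K_*(\calb)$), we conclude
\[
  K_*(F(S_1,-)) \;=\; K_*(F(S_0,-)) + K_*(F(S_2,-))
\]
as homomorphisms $K_*(\calb) \to K_*(\calb)$. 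Hence $[T] \mapsto K_*(F(T,-))$ descends to a group homomorphism $\Sw(\IZ,G) \to \End(K_*(\calb))$, which yields $\mu_F$ by adjunction.

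Bilinearity is then automatic: linearity in the $\Sw(\IZ,G)$-factor is precisely what the additivity step just established, and linearity in the $K_*(\calb)$-factor follows from the fact that for each $T$ the map $K_*(F(T,-))$ is a homomorphism of abelian groups. The identity $\mu_F([T] \ox a) = K_*(F(T,-))(a)$ holds by construction.

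The main obstacle I expect is conceptual rather than computational: short exact sequences in $\modules_{(\IZ,G)}$ are genuine (abelian) short exact sequences and need not split over $\IZ G$, so one cannot merely invoke the bilinearity of $F$ to decompose $F(S_1,-)$. It is precisely the exactness assumption on $F$ that converts such sequences into split exact sequences in $\calb$, which is the form the additivity theorem for additive categories requires. This is the sole reason "exactness" appears in the hypothesis, and it is what allows the proposition to generalize the classical Quillen pairing beyond the case $\calb = \modules_{R[G]}$.
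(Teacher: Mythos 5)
Your proposal is correct and is exactly the argument the paper intends: the paper gives no written proof of Proposition~\ref{prop:swan-acts-on-B}, relying on the citation of Quillen's additivity corollary (\cite[Corollary~1 on page~106]{Quillen(1973)}) for the classical case and asserting the generalization, and your writeup -- using the exactness hypothesis on $F$ to turn a short exact sequence in $\modules_{(\IZ,G)}$ into an objectwise split exact sequence of additive endofunctors of $\calb$, then invoking the additivity theorem to get $K_*(F(S_1,-)) = K_*(F(S_0,-)) + K_*(F(S_2,-))$ and hence well-definedness on $G_0$ -- is precisely that intended argument, including the correct observation that non-natural splittings suffice.
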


  \subsection{Action of $\GW(\IZ,G)$ in $L$-theory}
  Let $\calb$ be an additive category with a strict involution 
  $I_\calb$ and 
  \begin{equation*}
     F \colon \modules_{(R,G)} \x \calb \to \calb   
  \end{equation*}
  be an exact functor which is compatible with the
  involutions, i.e., $I_\calb(F(-,-)) = F(-^*,I_\calb(-))$.
  Then for a module $T \in \modules_{(G,\IZ)}$ the 
  linear functor $F(T,-) \colon \calb \to \calb$ does a priori
  not induce a map in $L$-theory because no canonical
  isomorphism $I_\calb(F(T,M)) \to F(T, I_\calb(M))$ is provided.
  To fix this, we pick an isomorphism
  $\varphi \colon T \to T^*$ in $\modules_{(\IZ,G)}$
  such that $\varphi^* = \varphi$, so $(T,\varphi)$
  is a symmetric form in $\modules_{(\IZ,G)}$.
  Then 
  \begin{equation*}
    F(\varphi, \id_{I_\calb(-)}) \colon F(T,I_\calb(-)) 
         \to F(T^*,I_\calb(-)) = I_\calb( F (T,-))
  \end{equation*}
  is a natural isomorphism and 
  $F((T,\varphi),-) := (F(T,-), F(\varphi, \id_{I_\calb(-)}))
     \colon \calb \to \calb$ is a functor of additive categories
  with involutions.
  There is the following analog of Proposition~\ref{prop:swan-acts-on-B}.

  \begin{proposition}
    \label{prop:GW-acts-on-B}
    Given an exact functor 
    $F \colon \modules_{(G,\IZ)} \x \calb \to \calb$
    that is compatible with involutions, there is a bilinear pairing
    \begin{equation*}
    \mu_F \colon \GW(\IZ,G) \ox L_*(\calb) \to L_*(\calb)
  \end{equation*}
  such that $\mu_F ( [T,\varphi] \ox b ) = L_*(F((T,\varphi),-) )(b)$
  for all $b \in L_*(\calb)$ and all symmetric forms $(T,\varphi)$
  over $\modules_{(\IZ,G)}$.
  \end{proposition}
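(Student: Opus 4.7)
The plan is to argue in direct analogy with the $K$-theoretic version in Proposition~\ref{prop:swan-acts-on-B}, inserting involution bookkeeping throughout. The discussion preceding the proposition already shows that for a symmetric form $(T,\varphi)$ in $\modules_{(\IZ,G)}$, the data $F((T,\varphi),-) = (F(T,-), F(\varphi, \id_{I_\calb(-)}))$ assembles into a functor of additive categories with involutions, and hence induces a homomorphism $L_*(F((T,\varphi),-)) \colon L_*(\calb) \to L_*(\calb)$. Since $\GW(\IZ,G)$ is generated by classes of symmetric forms modulo the relations of orthogonal sum and metabolic forms (for the exact structure on $\modules_{(\IZ,G)}$), it suffices to verify that the assignment $(T,\varphi) \mapsto L_*(F((T,\varphi),-))$ respects both kinds of relations and then to extend linearly in both variables to obtain $\mu_F$.

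Additivity on orthogonal sums is immediate: bilinearity of $F$ in the first variable provides a natural isomorphism
\begin{equation*}
  F((T_1,\varphi_1) \perp (T_2,\varphi_2), -) \;\cong\; F((T_1,\varphi_1),-) \oplus F((T_2,\varphi_2),-)
\end{equation*}
of functors of categories with involutions, and additivity of $L$-theory then gives the corresponding additivity of the induced maps. The core step is to show that $L_*(F((T,\varphi),-))$ vanishes whenever $(T,\varphi)$ is metabolic in the exact category $\modules_{(\IZ,G)}$. In this case there is a Lagrangian $L \hookrightarrow T$ yielding a short exact sequence $0 \to L \to T \to I_{\modules_{(\IZ,G)}}(L) \to 0$ in $\modules_{(\IZ,G)}$, where the quotient is identified with the dual of $L$ via $\varphi$. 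Because $F$ is exact in the first variable, applying $F(-,B)$ for each $B \in \calb$ produces a short exact sequence in $\calb$; since exact sequences in additive categories are by definition split exact, $F(T,B)$ decomposes as $F(L,B) \oplus F(I_{\modules_{(\IZ,G)}}(L),B)$. The compatibility of $F$ with the involutions identifies the second summand with $I_\calb(F(L,B))$, and unwinding the definition of $F(\varphi, \id_{I_\calb(B)})$ exhibits $F(L,-)$ as a Lagrangian subfunctor of $F((T,\varphi),-)$. Thus $F((T,\varphi),-)$ is hyperbolic at the functorial level and so induces zero on $L_*(\calb)$.

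The main obstacle is promoting this object-wise Lagrangian decomposition into a natural, involution-compatible statement at the level of symmetric Poincaré chain complexes (or equivalently on the full $L$-theory spectrum), so that it kills $L_n(\calb)$ for every $n$ rather than only for $n=0$. Once this is in place, the vanishing on metabolic forms together with additivity on orthogonal sums implies that $[T,\varphi] \mapsto L_*(F((T,\varphi),-))$ descends to $\GW(\IZ,G)$, and bilinearity of $\mu_F$ in the second argument follows from the linearity of $F$ in the second variable combined with additivity of $L_*$, completing the construction.
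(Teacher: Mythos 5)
There is a genuine gap, and you have in fact labelled it yourself: the sentence beginning ``The main obstacle is promoting this object-wise Lagrangian decomposition \dots'' followed by ``Once this is in place \dots'' defers precisely the step that constitutes the entire content of the proposition. Everything before it (functoriality of $F((T,\varphi),-)$ as a functor of categories with involution, additivity on orthogonal sums, bilinearity in the second variable) is routine; the non-trivial assertion is that for a metabolic form $(T,\varphi)$ the induced map $L_n(F((T,\varphi),-))\colon L_n(\calb)\to L_n(\calb)$ vanishes \emph{for all} $n$, i.e.\ on the level of symmetric/quadratic Poincar\'e complexes rather than only on forms in degree $0$. Producing from the Lagrangian subfunctor $F(L,-)\subseteq F((T,\varphi),-)$ a natural null-homotopy of the induced map of $L$-theory (via algebraic surgery on the transferred Poincar\'e complexes, compatibly with the involution and natural in $\calb$) is exactly the work done by Dress and by L\"uck--Ranicki in the surgery-transfer setting, and the paper's own proof consists of citing those references for the case where $\calb$ is the category of finitely generated free $R[G]$-modules and $F$ is the diagonal tensor product, together with the remark that the general additive-category case is no harder. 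Your proposal asserts the conclusion of that step (``induces zero on $L_*(\calb)$'') before conceding it is unproved, so as written it does not establish the proposition.

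Two smaller points. First, ``$F((T,\varphi),-)$ is hyperbolic at the functorial level'' overstates what the splitting gives you: a metabolic symmetric form need not be hyperbolic (already $\langle 1\rangle\perp\langle -1\rangle$ over $\IZ$), and the $\IZ$-linear splitting of $0\to L\to T\to T/L\to 0$ is not form-compatible; what you actually obtain is a form with a Lagrangian subfunctor, which is enough for the intended argument but should be stated as such. Second, when you invoke ``generators and relations'' for $\GW(\IZ,G)=W(\modules_{(\IZ,G)})$ you should make explicit that metabolic is meant with respect to the chosen exact structure (Lagrangians that are subobjects with free quotient as abelian groups), since that is the exact structure under which $F$ is assumed exact and under which your short exact sequence $0\to L\to T\to I_{\modules_{(\IZ,G)}}(L)\to 0$ is available.
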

   
  \begin{proof}
    If $\calb$ is the category of finitely generated free 
    $R[G]$-modules and $F$ is the diagonal tensor product,
    then this is worked out in detail in~\cite{Dress(1975)}
    and~\cite{Lueck-Ranicki(1988)}.
    The case of general $F$ and $\calb$ is not more complicated.
      \end{proof}
  

\typeout{-------------------- Section 6: The transfer ----------------------}  

  \section{The transfer}
    \label{sec:transfer}


  \subsection{Transfer functors}

  Let $G$ be a group with a metric $d_G$ and $E$ be a $G$-space.
  We define a functor
  \begin{equation*}
    \tr \colon \modules_{(\IZ,G)} \x \calo^G(E,G,d_G) \to \calo^G(E,G,d_G)
  \end{equation*}
  as follows.
  Recall that we have a  tensor product
  functor $\modules_\IZ \x \cala \to \cala$ which is compatible 
  with the involution on $\modules_\IZ$ and $\cala$, see 
  Section~\ref{sec:cat-prelim}.
  For objects $T = (\IZ^n,\rho) \in \modules_{(\IZ,G)}$ and 
  $M = (M_z)_{z \in G \x E \x \einsu} \in \calo^G(E,G,d_G)$ 
  we define $\tr(T,M) \in \calo^G(E,G,d_G)$ by setting
  \begin{equation*}
    \big( \tr(T,M) \big)_z := \IZ^n \ox_\IZ M_z
  \end{equation*}
  for $z \in  G \x E \x \einsu$.
  For morphisms $f \in \modules_{(\IZ,G)}$ and  
  $\psi = (\psi_{z,z'})_{z,z' \in G \x E \x \einsu} \in \calo^G(E,G,d_G)$ 
  we define $\tr(f,\psi)$ by setting
  \begin{equation*}
    \big( \tr(f,\psi) \big)_{z,z'} := (f \circ \rho(g^{-1}g')) \ox_\IZ \psi_{z,z'}
  \end{equation*}
  for $z = (g,e,t), z'= (g',e',t') \in  G \x E \x \einsu$.
  
  \begin{lemma}
    \label{lem:tr-is-exact}
    The functor $\tr$ is exact. It is compatible with involutions
    if $\cala$ comes with a (strict) involution.
  \end{lemma}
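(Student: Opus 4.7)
The plan is to verify the three assertions in turn: bilinearity of $\tr$ on morphisms, exactness in the first variable, and compatibility with involutions. Bilinearity is immediate from the defining formula, since $f \mapsto f \circ \rho(g^{-1}g')$ is linear in $f$ and the tensor pairing $- \ox_\IZ -$ is bilinear on morphisms, while addition and composition in $\calo^G(E,G,d_G)$ are computed entrywise.

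For exactness, start with a short exact sequence $0 \to S_0 \xrightarrow{i} S_1 \xrightarrow{p} S_2 \to 0$ in $\modules_{(\IZ,G)}$, which by definition is exact as a sequence of abelian groups. Since $S_2 = (\IZ^{n_2}, \rho_2)$ is free abelian, the sequence splits in $\modules_\IZ$; fix a section $s \colon \IZ^{n_2} \to \IZ^{n_1}$ (which need not intertwine $\rho_2$ with $\rho_1$). For each object $M$ in $\calo^G(E,G,d_G)$ I define a candidate section $\tilde{s} \colon \tr(S_2,M) \to \tr(S_1,M)$ by
\[
\tilde{s}_{z,z'} := \begin{cases} s \ox_\IZ \id_{M_z} & \text{if } z = z', \\ 0 & \text{otherwise.} \end{cases}
\]
All control conditions defining morphisms in $\calo^G(E,G,d_G)$ hold trivially since $\tilde{s}$ is concentrated on the diagonal $z = z'$.

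The crucial verification is that $\tilde{s}$ is $G$-equivariant even though $s$ itself is not: because the $G$-action on $\cala$ consists of additive functors $R_g$, it commutes with the direct-sum functor $\IZ^n \ox_\IZ -$, and hence $g(s \ox_\IZ \id_{M_z}) = s \ox_\IZ \id_{g(M_z)} = s \ox_\IZ \id_{M_{gz}}$. An entrywise computation then shows $\tr(p,\id_M) \circ \tilde{s} = \id_{\tr(S_2,M)}$, and the same construction applied to the splitting $\IZ^{n_0} \oplus \IZ^{n_2} \cong \IZ^{n_1}$ of abelian groups delivers the required direct-sum isomorphism, so that the induced sequence is split exact in $\calo^G(E,G,d_G)$.

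For compatibility with involutions, I aim for the strict equality $I_\calo \circ \tr = \tr \circ (I_{\modules_{(\IZ,G)}} \x I_\calo)$. On objects this is immediate: the transfer forgets $\rho$ on objects, both involutions act entrywise via $I_\cala$, and $I_\cala$ commutes with $- \ox_\IZ -$ by Section~\ref{sec:cat-prelim}. On morphisms, after expanding both sides using the compatibility of $\ox_\IZ$ with involutions and the contravariance of $I_{\modules_\IZ}$, the desired identity reduces to $\rho^*(g^{-1}g') \circ I(f) = I(f) \circ \rho'^*(g^{-1}g')$, which is precisely the statement that $I(f) \colon (\IZ^{n'}, \rho'^*) \to (\IZ^n, \rho^*)$ is itself a morphism in $\modules_{(\IZ,G)}$. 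The main obstacle throughout is the $G$-equivariance check for $\tilde{s}$; once one observes that a $\modules_\IZ$-section (not a $\modules_{(\IZ,G)}$-section) suffices because the $G$-action on $\cala$ commutes with $- \ox_\IZ -$, the remainder is bookkeeping.
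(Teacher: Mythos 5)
Your proof is correct and follows essentially the same route as the paper: the splitting of $\tr(q,\id_M)$ is the diagonal morphism built from a merely $\IZ$-linear section $s$, with the $G$-equivariance coming for free because the $R_g$ commute with $\IZ^n\ox_\IZ -$, and the involution compatibility reduces to the compatibility of $\ox_\IZ$ with involutions together with the intertwining property of morphisms in $\modules_{(\IZ,G)}$. You in fact spell out the equivariance and involution checks in more detail than the paper does, but the underlying argument is the same.
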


  \begin{proof} 
    The compatibility with involutions follows from the same 
    compatibility for $\ox_\IZ$. 

    Consider an exact sequence
    $0 \to S_0 \xrightarrow{i} S_1 \xrightarrow{q} S_2 \to 0$ in 
    $\modules_{(\IZ,G)}$.
    We have to show that for any object $M$ in $\calo^G(E,G,d_G)$
    that the composite $\tr(q,\id_M) \circ \tr(i,\id_M)$ 
    is trivial, $\tr(q,\id_M) \colon \tr(S_1,M) \to \tr(S_2,M)$
    is split surjective,  and that the direct sum of the splitting and the map
    $\tr(i,\id_M)$ yields an isomorphism 
    $\tr(S_0,M) \oplus \tr(S_2,M) \xrightarrow{\cong} \tr(S_1,M))$.  
    We only construct the splitting of  $\tr(q,\id_M)$.
    Let $s \colon S \hookrightarrow T$ be a section for $q$ as a map of
    $\IZ$-modules. Then a section $\hat s$ for $\tr(q,\id_M)$ is defined by setting
    \begin{equation*}
        (\hat s)_{z,z'} := \begin{cases}
                             s \ox \id_{M_z} & \text{if} \; z=z' \\
                             0            & \text{otherwise.}
                          \end{cases}
    \end{equation*}
  \end{proof}
  
  \begin{remark} To illustrate the proof above consider an
  epimorphism $p \colon M \to N$ of $\IZ G$-modules which are finitely generated 
  free as abelian groups and the induced map of $\IZ G$-modules 
  (with respect to the diagonal action) $p \otimes_{\IZ} \id_{\IZ G} \colon 
  M \otimes_{\IZ} \IZ G \to N \otimes_{\IZ} \IZ G$. We want to construct
  a $\IZ G$-splitting. Choose any map of $\IZ$-modules $s \colon N \to M$ with
  $p \circ s = \id_N$. It exists since we do \emph{not} require that $s$ 
  is compatible with the $G$-action. Then a $\IZ G$-splitting of 
  $p \otimes_{\IZ} \id_{\IZ G}$ is given by the $\IZ G$-map
  $N \otimes_{\IZ} \IZ G \to M \otimes_{\IZ} \IZ G$ sending 
  $n \otimes g$ to $gs(g^{-1}n) \otimes g$.
  \end{remark} 
   
  We will need a variant of $\tr$ that combines it with
  an induction map.
  This will yield additional control in the target category
  which is crucial for our argument.
  Let $\alpha \colon G \to F$ be a surjective group homomorphism,
  $H$ be subgroup of finite index in $F$. Put $\overline{H} = \alpha^{-1}(H)$. 
  We have defined induction and restriction
  in Section~\ref{sec:cat-prelim}.  Consider the functor
  \begin{equation*}
    \tr_\alpha := \tr( \res_\alpha \circ \ind_H^F ( - ),-) \colon
     \modules_{(\IZ,H)} \x \calo^G(E,G,d_G) \to \calo^G(E,G,d_G). 
  \end{equation*}
  Define a quasi-metric $d_{G,H}$ on $G \x G/\ol{H}$ by 
  \begin{equation*}
       d_{G,\ol{H}}((g, a \ol{H}),(h, b \ol{H})) :=
          \begin{cases}
             d_G(g,h) & \text{if} \; a \ol{H} = b \ol{H}, \\
             \infty & \text{otherwise}.
          \end{cases}
  \end{equation*}
  The projection $p_H \colon G \x G/\ol{H} \to G$ induces 
  a functor 
  $P_H \colon \calo^G(E, G \x G/\ol{H}, d_{G,\ol{H}}) \to
     \calo(E,G,d_G)$
  and we will see that we can lift $\tr_{\alpha}$ against $P_H$.
  Define a functor
  \begin{equation*}
     \wt{\tr}_{\alpha} \colon \modules_{(\IZ,H)} \x \calo^G(E,G,d_G) \to 
          \calo^G(E,G \x G/\ol{H},d_{G,\ol{H}})   
  \end{equation*}
  as follows.
  For objects $T = (\IZ^n,\rho)\in \modules_{(\IZ,H)}$ and 
  $M = (M_z)_{z \in G \x E \x \einsu} \in \calo^G(E,G,d_G)$ 
  we define $\wt{\tr}_\alpha(T,M)$ by setting
  \begin{equation*}
    \big( \wt{\tr}_{\alpha}(T,M) \big)_y := \IZ^n \ox_\IZ M_z 
  \end{equation*}
  for $y = (g,a\overline{H},e,t) \in G \x G/\ol{H} \x E \x \einsu$
  and $z := (g,e,t)$.
  \ingreen{In order to write out $\wt{\tr}_\alpha$ for morphisms we need to choose 
  representatives $g_0,\dots,g_{m-1} \in G$ for 
  $G/\overline{H} = \{g_0\overline{H}, \ldots, g_{m-1}\overline{H}\}$.}
  For morphisms $f \in \modules_{(\IZ,H)}$
  and $\psi = (\psi_{z,z'})_{z,z' \in G \x E \x \einsu} \in \calo^G(E,G,d_G)$ 
  we define $\wt{\tr}_\alpha(f,\psi)$ by setting 
  \begin{equation*}
    \big( \wt{\tr}_{\alpha}(f,\psi) \big)_{y,y'} := 
            \begin{cases}
             f \circ \rho(\alpha(\ingreen{{g_k}^{-1}g^{-1}g'g_{k'}})) 
                     \ox_\IZ \psi_{z,z'} &
             \text{if} \; \ingreen{gg_k \overline{H} = g'g_{k'} \overline{H}}, \\
             0 & \text{otherwise}.
            \end{cases}
  \end{equation*}
  for \ingreen{$y = (g,gg_k\overline{H},e,t), 
      y' = (g',g'g_{k'}\overline{H},e',t') \in G \x G/\ol{H} \x E \x \einsu$}
  and $z := (g,e,t)$, $z' := (g',e',t')$.
  (The extra $G/\ol{H}$-factor incorporates
  the induction from $H$ to $F$; the appearance of $\alpha$ incorporates
  the restriction along $\alpha$.)

  The following Lemma is a simple exercise in the definitions
  of $\tr_\alpha$ and $\wt{\tr}_\alpha$.

  \begin{lemma}
    \label{lem:tilde-tr}
    \
    \begin{enumerate}
    \item \label{lem:tilde-tr:equivalent} 
       $P_H \circ \wt{\tr}_{\alpha}$ and $\tr_{\alpha}$ are 
       equivalent functors.
    \item \label{lem:tilde-tr:control}
       If $\psi$ is an $R$-controlled
       morphism in $\calo^G(E,G,d_G)$ and $f \in \modules_{(\IZ,H)}$ is any 
       morphism, then $\wt{\tr}_\alpha(f,\psi)$ is $R$-controlled
      in $\calo^G(E,G \x G/\overline{H},d_{G,\overline{H}})$.
    \end{enumerate}
  \end{lemma}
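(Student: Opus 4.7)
The plan is to unpack the definitions directly; the author correctly describes this as a simple exercise in bookkeeping, and I expect no conceptual difficulty, only careful matching of indices.

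For part~\ref{lem:tilde-tr:equivalent}, fix representatives $g_0,\dots,g_{m-1}$ of $G/\ol{H}$ as used in the definition of $\wt{\tr}_\alpha$; under $\alpha$ these also give representatives of $F/H$ used in the definition of $\ind_H^F$. On objects, functoriality of $\calo^G(E,-,-)$ in the space variable gives
\begin{equation*}
  \bigl(P_H \wt{\tr}_\alpha(T,M)\bigr)_{(g,e,t)} \;=\; \bigoplus_{a\ol{H} \in G/\ol{H}} \IZ^n \ox_\IZ M_{(g,e,t)} \;\cong\; \IZ^{mn} \ox_\IZ M_{(g,e,t)},
\end{equation*}
which is exactly $\tr_\alpha(T,M)_{(g,e,t)} = (\res_\alpha \ind_H^F T) \ox_\IZ M_{(g,e,t)}$. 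On morphisms I would match the block-matrix formula for $P_H$ applied to $\wt{\tr}_\alpha(f,\psi)$ (blocks indexed by cosets $gg_k\ol{H}$, $g'g_{k'}\ol{H}$) with the explicit induction formula from Section~\ref{sec:cat-prelim}: the condition $gg_k\ol{H} = g'g_{k'}\ol{H}$ translates under $\alpha$ to $\alpha(g^{-1}g')\alpha(g_{k'})H = \alpha(g_k)H$, and in that case the nontrivial block is $f \circ \rho\bigl(\alpha(g_k^{-1}g^{-1}g'g_{k'})\bigr) \ox_\IZ \psi_{z,z'}$, which is precisely the $(k,k')$-block of $\res_\alpha(\ind_H^F f) \circ (\res_\alpha \ind_H^F \rho)(g^{-1}g') \ox_\IZ \psi_{z,z'}$. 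Thus the two functors agree up to the canonical isomorphism arising from the choice of representatives, which is exactly the ambiguity in $\ind_H^F$ noted in Section~\ref{sec:cat-prelim}.

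For part~\ref{lem:tilde-tr:control} I would simply read off the control from the definition. Assume $\psi$ is $R$-controlled and consider $y = (g, gg_k\ol{H}, e, t)$ and $y' = (g', g'g_{k'}\ol{H}, e', t')$ with $\wt{\tr}_\alpha(f,\psi)_{y,y'} \neq 0$. Non-vanishing forces both $gg_k\ol{H} = g'g_{k'}\ol{H}$ and $\psi_{(g,e,t),(g',e',t')} \neq 0$. The first equality, combined with the definition of $d_{G,\ol{H}}$, gives
\begin{equation*}
  d_{G,\ol{H}}\bigl((g, gg_k\ol{H}),(g', g'g_{k'}\ol{H})\bigr) \;=\; d_G(g,g'),
\end{equation*}
and $R$-control of $\psi$ forces $d_G(g,g') \leq R$. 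So the entry vanishes whenever $d_{G,\ol{H}}\bigl((g, gg_k\ol{H}),(g', g'g_{k'}\ol{H})\bigr) > R$, which is precisely $R$-control in $\calo^G(E, G \x G/\ol{H}, d_{G,\ol{H}})$.

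The only mildly awkward step is the index bookkeeping in part~\ref{lem:tilde-tr:equivalent}: making the $m$-fold direct sum produced by the projection $P_H$ agree on the nose with the $mn$-dimensional output prescribed by the induction formula of Section~\ref{sec:cat-prelim}. Once the representatives are fixed consistently on both sides, this is purely formal. Part~\ref{lem:tilde-tr:control} is essentially an observation that the $G/\ol{H}$-coordinate of $y$ is rigidly tied to the $G$-coordinate, so the extra control does not cost anything.
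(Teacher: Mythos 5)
Your proposal is correct and follows essentially the same route as the paper's own proof: unravel the definitions, identify $(\res_\alpha\circ\ind_H^F)(T)$ with the $m$-fold direct sum of copies of $\IZ^n$ indexed by the cosets $g_k\ol{H}$, match the $(k,k')$-blocks of $\tr_\alpha(f,\psi)$ with the entries of $\wt{\tr}_\alpha(f,\psi)$ to get naturality of the canonical isomorphism, and for part~\ref{lem:tilde-tr:control} observe that a nonzero entry forces $gg_k\ol{H}=g'g_{k'}\ol{H}$, so the $d_{G,\ol{H}}$-distance reduces to $d_G(g,g')\le R$. No gaps.
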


  \begin{proof}~\ref{lem:tilde-tr:equivalent}
    \ingreen{
    To check this we unravel the definitions of 
    $\tr_\alpha$ and $\wt{\tr}_\alpha$ a bit. 
    For $T = (\IZ^n,\rho)$ we have 
    \[     (\res_\alpha \circ \ind_H^F)(\IZ^n,\rho) =
                (\IZ^{nm}, \eta \circ \alpha) 
          = (\bigoplus_{k=0}^{m-1} \IZ^n, \eta \circ \alpha) 
    \]
    where $\eta$ is as defined in the paragraph before 
    Remark~\ref{rem:induction}. 
    It will be helpful to name each of the $m$ copies of $\IZ^n$,
    by $T_0, \dots, T_{m-1}$. 
    Then $\IZ^{nm} = \bigoplus_{k=0}^{m-1} T_k$. 
    Let $z = (g,e,t)  \in G \x E \x \einsu$.
    For $y = (g,g g_k \ol{H},e,t)$ we have 
    $\big( \wt{\tr}_\alpha (T,M) \big)_y 
              = T_k \ox_\IZ M_z$ (as $T_k = \IZ^n$).
    Therefore
    $\big( p_H \circ \wt{\tr}_\alpha (T,M) \big)_z 
        \cong  \bigoplus_{k=0}^{m-1} T_k \ox_\IZ M_z
        = \big( \tr_\alpha(T,M) \big)_z$.  
    In particular, we have a canonical isomorphism  
    $\tau_{T,M} \colon p_H \circ \wt{\tr}_\alpha (T,M) \cong \tr_\alpha(T,M)$.
   }

    \ingreen{
    We have to check that $\tau$ is natural with respect to morphisms
    $(f,\psi)$.
    Inspection of the definition of $\eta$ shows that 
    for $\gamma \in G$ the $(k,k')$-block in $\eta \circ \alpha(\gamma)$
    with respect to $\IZ^{nm} = \bigoplus_{k=0}^{m-1} T_k$ is given by
    \begin{equation*}
           \big( \eta \circ \alpha (\gamma) \big)_{k,k'} = 
             \begin{cases}
                 \rho(\alpha (g^{-1}_{k} \gamma g_{k'})) 
                       & \text{if} \; \gamma g_{k'} 
                           \overline{H} = g_{k} \overline{H}, \\
                0 & \text{otherwise.}
             \end{cases}
    \end{equation*}
    By definition 
    \begin{equation*}
     \big( \tr_\alpha (f,\psi) \big)_{z,z'} = 
         (f \circ \eta(\alpha(g^{-1}g'))) \ox_\IZ \psi_{z,z'}
    \end{equation*}
    for $z = (g,e,t), z'= (g',e',t') \in  G \x E \x \einsu$.
    Thus with respect to the decomposition
    $\IZ^{nm} = \bigoplus_{k=0}^{m-1} T_k$ the $(k,k')$-block of
    $\big( \tr_\alpha (f,\psi) \big)_{z,z'}$ is given by
    \begin{equation*}
     \big( \big( \tr_\alpha (f,\psi) \big)_{z,z'} \big)_{k,k'} =
       \begin{cases} 
         (f \circ \rho(\alpha(g_k^{-1} g^{-1}g' g_{k'}))) \ox_\IZ \psi_{z,z'}
               & \text{if} \; g'g_{k'} g g_{k} \overline{H} = \overline{H}, \\
         0 & \text{otherwise.}
       \end{cases} 
    \end{equation*}
    Comparing this to the definition of $\wt{\tr}_\alpha$ we see that
    $\tau$ is natural for morphisms.
    \\[1mm]~\ref{lem:tilde-tr:control}
    By definition we have for $z = (g,gg_k \overline{H},e,t)$,
    $z' = (g',g'g_{k'} \overline{H},e',t')$
    \begin{equation*}
      \big( \wt{\tr}_\alpha (f,\psi) )_{z,z'} \neq 0 
          \quad \iff \quad \big( gg_k \overline{H} = g' g_{k'} \overline{H} 
            \;  \text{and}  \; \psi_{z,z'} \neq 0 \big)
    \end{equation*}
    where $z = (g,e,t)$, $z' = (g',e',t')$.
   }
  \end{proof}


  \subsection{Surjectivity of the 
        $P_k$ in~\eqref{eq:main-diagram}}

  In the remainder of this section we use the notation 
  from Section~\ref{sec:core}.
  In particular $G$ will from now on be a Farrell-Hsiang group.
  We denote by $(p_n)_* \colon \calo^G(E,S_n,d_{S_n}) \to \calo^G(E,G,d_G)$ 
  the functor induced by the projection 
  $p_n \colon S_n = G \x \coprod_{H \in \calh_n} G/\ol{H} \to G$.  

  \begin{proposition}
    \label{prop:onto}
    Let $n \in \IN$.
    \begin{enumerate}
    \item \label{prop:onto:K}
       There are linear functors
       $F_n^+, F_n^- \colon \calo^G(E,G,d_G) \to \calo^G(E,S_n,d_{S_n})$
       with the following two properties 
       \begin{itemize}
       \item $K_*((p_n)_* \circ F_n^+)  - 
                  K_*((p_n)_* \circ F_n^-)$ is the identity
          on $K_*( \calo^G(E,G,d_G) )$;
       \item if $R > 0$ and $\psi \in \calo^G(E,G,d_G)$ is
          $R$-controlled, then $F_n^+(\psi)$ and $F_n^-(\psi)$
          are both also $R$-controlled. 
       \end{itemize}
    \item \label{prop:onto:L}
       There are functors of additive categories with involutions
       $G_n^+ = (G_n^+,E_n^+)$ and  
       $G_n^- = (G_n^-,E_n^-) 
           \colon \calo^G(E,G,d_G) \to \calo^G(E,S_n,d_{S_n})$
       with the following  properties 
       \begin{itemize}
       \item  $L_*((p_n)_* \circ G_n^+) - L_*((p_n)_* \circ G_n^-)$
          is the identity
          on $L_*( \calo^G(E,G,d_G) )$;
       \item if $R > 0$ and $\psi \in \calo^G(E,G,d_G)$ is
          $R$-controlled, then $G^+_n(\psi)$ 
          and $G_n^-(\psi)$ are both also $R$-controlled.
       \item
          Denote by $I$ both the involution on $\calo^G(E,S_n,d_{S_n})$
          and the involution on $\calo^G(E,G,d_G)$.
          For each object $M \in \calo^G(E,G,d_G)$,
          the isomorphisms 
          $E_n^+(M) \colon G_n^+(I(M)) \to I(G_n^+(M))$
          and   
          $E_n^-(M) \colon G_n^-(I(M)) \to I(G_n^-(M))$
          are $0$-controlled.
       \end{itemize}
    \end{enumerate}
  \end{proposition}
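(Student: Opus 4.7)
The plan is to build $F_n^\pm$ and $G_n^\pm$ from the enhanced transfer $\wt{\tr}_{\alpha_n}$ of Section~\ref{sec:transfer} using Swan's and Dress' decompositions of the units $1_\Sw$ and $1_\GW$. Since the finite group $F_n$ has only finitely many hyperelementary subgroups, Theorem~\ref{thm:Swan_and_Dress} gives classes $\tau_H \in \Sw(\IZ,H)$ (respectively $\sigma_H \in \GW(\IZ,H)$) with $\sum_{H \in \calh_n} \ind_H^{F_n}(\tau_H) = 1_\Sw$ in $\Sw(\IZ,F_n)$ (respectively $1_\GW$ in $\GW(\IZ,F_n)$). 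Write $\tau_H = [T_H^+] - [T_H^-]$ with $T_H^\pm \in \modules_{(\IZ,H)}$, and analogously $\sigma_H = [T_H^+,\varphi_H^+]-[T_H^-,\varphi_H^-]$ as differences of symmetric forms over $\modules_{(\IZ,H)}$. Observe that the quasi-metric $d_{S_n}$ is infinite between different $\calh_n$-components, so $\calo^G(E,S_n,d_{S_n})$ canonically decomposes as the direct sum over $H \in \calh_n$ of the categories $\calo^G(E,G \x G/\ol{H},d_{G,\ol{H}})$.

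Define
\begin{equation*}
 F_n^\pm \;:=\; \bigoplus_{H \in \calh_n} \wt{\tr}_{\alpha_n}(T_H^\pm,-)
  \;\colon\; \calo^G(E,G,d_G) \to \calo^G(E,S_n,d_{S_n}).
\end{equation*}
Lemma~\ref{lem:tilde-tr}\ref{lem:tilde-tr:control} yields the claimed control property: an $R$-controlled $\psi$ is sent to an $R$-controlled morphism in each summand. To identify the effect on $K$-theory, apply Lemma~\ref{lem:tilde-tr}\ref{lem:tilde-tr:equivalent} to get a natural equivalence $(p_n)_* \circ F_n^\pm \simeq \bigoplus_H \tr_{\alpha_n}(T_H^\pm,-) = \bigoplus_H \tr(\res_{\alpha_n}\ind_H^{F_n}(T_H^\pm),-)$. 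Feeding this into the pairing $\mu_{\tr}$ of Proposition~\ref{prop:swan-acts-on-B} and using bilinearity,
\begin{equation*}
 K_*((p_n)_*\circ F_n^+) - K_*((p_n)_*\circ F_n^-)
 \;=\; \mu_{\tr}\Bigl(\res_{\alpha_n}\bigl(\textstyle\sum_H \ind_H^{F_n}(\tau_H)\bigr) \ox -\Bigr)
 \;=\; \mu_{\tr}(1_\Sw \ox -),
\end{equation*}
which is the identity because $\tr((\IZ,\text{triv}),-)$ is naturally equivalent to the identity functor on $\calo^G(E,G,d_G)$. This proves~\ref{prop:onto:K}.

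For~\ref{prop:onto:L}, repeat the construction with the symmetric forms $(T_H^\pm,\varphi_H^\pm)$, setting $G_n^\pm := \bigoplus_H \wt{\tr}_{\alpha_n}((T_H^\pm,\varphi_H^\pm),-)$ as functors of additive categories with involutions. The natural equivalence part $E_n^\pm$ of $\wt{\tr}_{\alpha_n}((T_H^\pm,\varphi_H^\pm),-)$ is given on an object $M$ by the morphism $\wt{\tr}_{\alpha_n}(\varphi_H^\pm,\id_{I(M)})$ into the target category. Inspection of the defining formula for $\wt{\tr}_{\alpha_n}$ on morphisms shows that this is supported where $z=z'$ and $g_k\ol{H}=g_{k'}\ol{H}$, i.e.\ strictly on the diagonal $y=y'$; hence it is $0$-controlled, which is the final control clause. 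The identity $L_*((p_n)_*\circ G_n^+) - L_*((p_n)_*\circ G_n^-) = \id$ then follows by the same computation as above, using Proposition~\ref{prop:GW-acts-on-B} in place of Proposition~\ref{prop:swan-acts-on-B} together with Dress' decomposition of $1_\GW$.

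The only genuinely subtle point is the $L$-theoretic setup: one must make sure that $\wt{\tr}_{\alpha_n}$ is compatible with involutions on the categorical level (which reduces to the compatibility of $\ox_\IZ$ and $\ind_H^{F_n},\res_{\alpha_n}$ with involutions recorded in Section~\ref{sec:cat-prelim} and Lemma~\ref{lem:tr-is-exact}), so that symmetric forms really yield functors of categories with involutions to which $L$-theory can be applied. Once this bookkeeping is in place, the natural isomorphism $E_n^\pm$ inherits diagonal support from the defining formula of $\wt{\tr}_{\alpha_n}$, and the argument proceeds exactly in parallel with the $K$-theoretic case.
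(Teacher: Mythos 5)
Your construction is exactly the paper's: decompose $1_\Sw$ (resp.\ $1_\GW$) by Swan/Dress over the hyperelementary subgroups of $F_n$, realize each summand by the lifted transfer $\wt{\tr}_{\alpha_n}(T_H^\pm,-)$ (resp.\ with the symmetric forms $(T_H^\pm,\varphi_H^\pm)$) into the $H$-component of $\calo^G(E,S_n,d_{S_n})$, identify the composition with $(p_n)_*$ via Lemma~\ref{lem:tilde-tr}~\ref{lem:tilde-tr:equivalent} and the pairings of Propositions~\ref{prop:swan-acts-on-B} and~\ref{prop:GW-acts-on-B}, and get the control statements (including the $0$-control of $E_n^\pm(M)$ from $\id_{I(M)}$) from Lemma~\ref{lem:tilde-tr}~\ref{lem:tilde-tr:control}. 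This matches the paper's proof in both structure and detail, so there is nothing to add.
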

  
  \begin{proof}~\ref{prop:onto:K}
    By Theorem~\ref{thm:Swan_and_Dress}~\ref{thm:Swan_and_Dress:Swan} there 
    are $\tau_H \in \Sw(\IZ,H)$, $H \in \calh_n$ such that
    $1_{\Sw} = \sum_{H \in \calh_n} \ind_H^{F_n}(\tau_H) \in \Sw(\IZ,F_n)$.
    Any element in $\Sw(\IZ,H)$ can be written as the difference 
    of the classes of two modules.
    Pick modules $T^+_H$ and $T^-_H \in \modules_{(\IZ,G)}$, $H \in \calh_n$
    such that $\tau_H = [T^+_H] - [T^-_H]$.
    Because $\res_{\alpha_n}$ sends $1_{\Sw} \in \Sw(\IZ,F_n)$ 
    to $1_{\Sw} \in \Sw(\IZ,G)$ we obtain 
    \begin{equation*}
      1_{\Sw} = \sum_{H \in \calh_n} 
                         [\res_{\alpha_n} \circ \ind_H^{F_n} (T^+_H)]
                       - [\res_{\alpha_n} \circ \ind_H^{F_n} (T^-_H)]
       \in \Sw(\IZ,G)
    \end{equation*}
    For $H \in \calh_n$ we have a canonical inclusion
    $G \x G/\ol{H} \to S_n = G \x \coprod_{K \in \calh_n} G/ \ol{K}$
    that induces an inclusion
    $\calo^G(E,G \x G/\ol{H},d_{G,\ol{H}}) \to \calo^G(E,S_n,d_{S_n})$. 
    Define $F^\pm_{H}$ as the composition of $\wt{\tr}_{\alpha_n}(T^\pm_H,-)$
    with this inclusion.
    Then  $K_*((p_n)_* \circ F^\pm{H}) = 
       K_*(\tr(\res_{{\alpha_n}} \ind_H^{F_n} (T^\pm_H),-))$
    by Lemma~\ref{lem:tilde-tr}~\ref{lem:tilde-tr:equivalent}.
    Define now
    \begin{equation*}
      F_n^\pm := \bigoplus_{H \in \calh_n} F^\pm_H. 
    \end{equation*}
    The functor $\tr$ is exact by Lemma~\ref{lem:tr-is-exact} and so 
    Proposition~\ref{prop:swan-acts-on-B} applies.
    Therefore we can compute for all $a \in K_*(\calo^G(E,G,d_G))$ 
    \begin{eqnarray*}
      \lefteqn{K_*((p_n)_* \circ F_n^+)(a) - K_*((p_n)_* \circ F_n^-)(a)} 
      \\
      & = &
      \sum_{H \in \calh_n} K_*((p_n)_* \circ F^+_H)(a) - 
                             K_*((p_n)_* \circ F^-_H)(a)
      \\
      & = &
      \sum_{H \in \calh_n}  
                K_*(\tr(\res_{{\alpha_n}} \circ \ind_H^{F_n}(T^+_H)))(a)
                 -  K_*(\tr(\res_{{\alpha_n}} \circ\ind_H^{F_n}(T^-_H)))(a)
      \\
      & = &
      \sum_{H \in \calh_n} 
             \mu_{\tr}([\res_{{\alpha_n}} \circ\ind_H^{F_n}(T^+_H)] \ox a)
            - \mu_{\tr}([\res_{{\alpha_n}} \circ\ind_H^{F_n}(T^-_H)] \ox a)
      \\
      & = &
      \mu_{\tr}((\sum_{H \in \calh_n} 
                 [\res_{{\alpha_n}} \circ \ind_H^{F_n}(T^+_H)]
                   - [\res_{{\alpha_n}} \circ \ind_H^{F_n}(T^-_H)]) \ox a)
      \\
      & = &
      \mu_{\tr}( 1_{\Sw} \ox a) = a
    \end{eqnarray*}
    If $R > 0$ and  $\psi \in \calo^G(E,G,d_G)$ is $R$-controlled
    then each $F^\pm_H(\psi)$ is $R$-controlled, because of the 
    control property of $\wt{\tr}_{\alpha_n}$ 
    (Lemma~\ref{lem:tilde-tr}~\ref{lem:tilde-tr:control}) and because
    $G \x G/\ol{H} \to S_n$ is an isometric embedding.
    The direct sum of $R$-controlled morphisms is again
    $R$-controlled and therefore $F^+(\psi)$ and $F^-(\psi)$
    are both $R$-controlled.  
    \\[1mm]~\ref{thm:onto:L}
    We can proceed exactly as in the $K$-theory case.
    By Theorem~\ref{thm:Swan_and_Dress}~\ref{thm:Swan_and_Dress:Dress}
    there are $\sigma_H \in \GW(\IZ,H)$, $H \in \calh_n$ 
    such that $1_{\GW} = \sum_{H \in \calh_n} \ind_{F_n}^{H}(\sigma_H)$.
    Any element in $\GW(\IZ,H)$ can be written as the difference 
    of the classes of two symmetric forms.
    Pick symmetric forms $(T^+_H,\varphi^+_H)$ and 
    $(T^-_H,\varphi^-_H)$ over $\modules_{(\IZ,G)}$, $H \in \calh_n$
    such that $\sigma_H = [(T^+_H,\varphi^+_H)] - [(T^-_H,\varphi^-_H)]$.
    Define $G^\pm_H$ as the composition of
    $\tr_{\alpha_n}((T^+_H,\varphi^+_H),-)$ with the inclusion
    $\iota_H \colon 
      \calo^G(E,G \x G/\ol{H},d_{G,\ol{H}}) \to \calo^G(E,S_n,d_{S_n})$
    and set
    \begin{equation*}
      G_n^\pm := \bigoplus_{H \in \calh_n} G_H^\pm.
    \end{equation*}
    As in the $K$-theory  case it follows 
    (using now Proposition~\ref{prop:GW-acts-on-B}) that for 
    all $b \in L_*(\calo^G(E,G,d_G))$ we have
    \begin{equation*}
      L_*((p_n)_* \circ G_n^+) (b) - L_*((p_n)_* \circ G_n^-)(b) = b
    \end{equation*}
    and that $G_n^\pm(\psi)$ is $R$-controlled, whenever
    $\psi$ is $R$-controlled.

    It remains to prove the final claim.
    Let $M$ be an object from $\calo^G(E,G,d_G)$.
    Then
    \begin{eqnarray*}
       G_n^\pm(I(M)) = \bigoplus_H G_H^+(I(M)) 
       & = & 
       \bigoplus_H \iota_H(\wt{\tr}_{\alpha_n}(T_H^\pm,I(M)) \\
       I(G_n^\pm(M) = \bigoplus_H I (G_H^+(M)) 
       & = & 
       \bigoplus_H I (\iota_H(\wt{\tr}_{\alpha_n}(T_H^\pm,M) \\
       & = & \bigoplus_H \iota_H(\wt{\tr}_{\alpha_n}((T_H^\pm)^*,I(M))
    \end{eqnarray*}
    and 
    \begin{equation*}
      E^\pm_n(M) = 
         \bigoplus_H \iota_H(\wt{\tr}_{\alpha_n}(\varphi_H^\pm,\id_{I(M)}). 
    \end{equation*}
    The control claim follows from 
    Lemma~\ref{lem:tilde-tr}~\ref{lem:tilde-tr:control} because 
    $\id_{I(M)}$ is $0$-controlled.
  \end{proof}

  \begin{theorem} 
     \label{thm:onto}
     \
     \begin{enumerate}
     \item \label{thm:onto:K}
        For all $a \in K_* (\calo^G(E,G,d_G))$ there is 
        $\hat a \in K_* (\calo^G(E,(S_n,d_{S_n})_{n \in \IN})$  
        such that for all $k$ we have $(K_*(P_k))(\hat a) = a$.
     \item \label{thm:onto:L}
        For all $b \in L_* (\calo^G(E,G,d_G))$ there is 
        $\hat b \in L_* (\calo^G(E,(S_n,d_{S_n})_{n \in \IN})$  
        such that for all $k$ we have $(L_*(P_k))(\hat b) = b$.
     \end{enumerate}
  \end{theorem}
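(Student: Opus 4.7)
The plan is to assemble the control-preserving functors $F_n^\pm$ (respectively $G_n^\pm$) produced in Proposition~\ref{prop:onto} into a single functor landing in the controlled product category, and then take $\hat a$ (respectively $\hat b$) to be the difference of the induced maps on $K$- (respectively $L$-) theory applied to $a$ (respectively $b$).

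For part~\ref{thm:onto:K}, I would define
\[
  \hat F^\pm \colon \calo^G(E,G,d_G) \to \calo^G(E,(S_n,d_{S_n})_{n \in \IN})
\]
by $\hat F^\pm(M) := (F_n^\pm(M))_{n \in \IN}$ on objects and $\hat F^\pm(\psi) := (F_n^\pm(\psi))_{n \in \IN}$ on morphisms. The nontrivial point is that this lands in the controlled product category $\calo^G(E,(S_n,d_{S_n})_{n \in \IN})$ and not merely in the unrestricted product $\prod_n \calo^G(E,S_n,d_{S_n})$: if $\psi$ is $R$-controlled, then by the second bullet in Proposition~\ref{prop:onto}\ref{prop:onto:K} each $F_n^\pm(\psi)$ is $R$-controlled with a bound $R$ that is independent of $n$, so the assembled morphism is $R$-controlled in the sense of Section~\ref{sec:calo}. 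I would then set $\hat a := K_*(\hat F^+)(a) - K_*(\hat F^-)(a)$. For any $k$, the composition $P_k \circ \hat F^\pm$ agrees with $(p_k)_* \circ F_k^\pm$, since the projection to the $k$-th factor simply picks out $F_k^\pm$. Applying $K_*$ and invoking the first bullet of Proposition~\ref{prop:onto}\ref{prop:onto:K} gives
\[
  K_*(P_k)(\hat a) = K_*((p_k)_* \circ F_k^+)(a) - K_*((p_k)_* \circ F_k^-)(a) = a.
\]

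For part~\ref{thm:onto:L} I would proceed in the same way, assembling the functors of additive categories with involutions $G_n^\pm = (G_n^\pm,E_n^\pm)$ into $\hat G^\pm = (\hat G^\pm,\hat E^\pm)$ with $\hat E^\pm(M) := (E_n^\pm(M))_{n \in \IN}$. Here the third bullet in Proposition~\ref{prop:onto}\ref{prop:onto:L}, asserting that each $E_n^\pm(M)$ is $0$-controlled, is exactly what is needed for $\hat E^\pm(M)$ to be a morphism in the controlled product category; without a uniform control bound on the natural isomorphisms witnessing compatibility with the involutions, the involutive structure would not transport to the controlled product. With this in hand, $\hat G^\pm$ is a functor of additive categories with involutions, and setting $\hat b := L_*(\hat G^+)(b) - L_*(\hat G^-)(b)$ the same formal manipulation gives $L_*(P_k)(\hat b) = b$ for all $k$.

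The main obstacle in both parts is just the uniform control bookkeeping, namely the verification that the diagonal functor actually takes values in the controlled product rather than in the arbitrary product; this is already packaged into Proposition~\ref{prop:onto}, so the proof reduces to the bilinearity and functoriality computation above. Everything else is formal.
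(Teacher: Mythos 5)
Your proposal is correct and is essentially identical to the paper's own proof: the paper likewise assembles the $F_n^\pm$ (respectively $G_n^\pm$) into functors to $\calo^G(E,(S_n,d_{S_n})_{n\in\IN})$ using the uniform $R$-control from Proposition~\ref{prop:onto}, notes that $P_k\circ F^\pm=(p_k)_*\circ F_k^\pm$, and sets $\hat a := K_*(F^+)(a)-K_*(F^-)(a)$ (and analogously for $\hat b$), with the $0$-control of the $E_n^\pm$ invoked exactly as you do to combine them into a natural transformation for the involutive structure. Nothing is missing.
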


  \begin{proof}~\ref{thm:onto:K}
    Let $F_n^+$, $F_n^-$ be the sequences of functors from
    Proposition~\ref{prop:onto}~\ref{prop:onto:K}.
    Because of the control property in~\ref{prop:onto}~\ref{prop:onto:K}
    the product functors
    \begin{equation*}
      \prod_n F_n^+ \colon 
      \calo^G(E,G,d_G) \to \prod_n \calo^G(E,S_n,d_{S_n})
    \end{equation*}
    lift uniquely to functors
    \begin{equation*}
      F^\pm \colon \calo^G(E,G,d_G) \to \calo^G(E,(S_n,d_{S_n})_{n \in \IN}).
    \end{equation*}
    Then $P_k \circ F^\pm = (p_k)_* \circ F^\pm_k$ for all $k \in \IN$.
    Thus the first assertion in~\ref{prop:onto}~\ref{prop:onto:K}
    implies that
    $K_*(P_k)  (K_*(F^+)(a) - K_*(F^-)(a)) = a$ for
    all $a \in \calo^G(E,G,d_G)$.
    Therefore we can set $\hat a := K_*(F^+)(a) - K_*(F^-)(a)$.
    \\[1mm]~\ref{thm:onto:L}
    For $L$-theory we can argue exactly as we did for $K$-theory,
    now using the $G_n^\pm$ from 
    Proposition~\ref{prop:onto}~\ref{prop:onto:L}.
    Here the third assertion in~\ref{prop:onto}~\ref{prop:onto:L}  
    is needed to ensure that the $E_n^\pm$ can be combined to 
    a natural transformation, just as the second assertion is 
    needed to ensure that the $G_n^\pm$ can be combined to a functor. 
  \end{proof}


\typeout{-------------------- Section 7: The functor F ----------------------}  
  \section{The functor $F$}
    \label{sec:F}

  We use the notation from Section~\ref{sec:core}.
  Note first that for any subgroup $U$ of $G$ 
  there is a bijection of $G$-sets
  $G \x G/U \to 
   \ind_G^{U} \res_U^G G = G \x_{U} G$ 
  defined by $(a, g U) \mapsto (g,g^{-1}a)$;
  the inverse is given by $(g,b) \mapsto gb, g U$.
  (We use the diagonal $G$-action on $G \x G/U$.)
  For $H \in \calh_n$ we obtain a $G$-map 
  $\tilde{f}_H \colon G \x G/\ol{H} \to \ind_G^{\ol{H}} E_H$
  by composing this bijection (for $U = \ol{H}$) with 
  $\ind_G^{\ol{H}} f_H \colon 
     \ind_G^{\ol{H}} G \to
     \ind_G^{\ol{H}} E_H$.
  Define the $G$-map  $f_n \colon S_n \to X_n$ by
  \begin{equation*}
    f_n( a, g \ol{H}) := (a, \tilde{f}_H(a,g \ol{H}) 
         = (a, g, f_H(g^{-1}a)),  
  \end{equation*}
  for $a, g \in G$ and  $H \in \calh_n$. 

  \begin{proposition}
    \label{prop:F}
    The sequence of maps $(f_n)_{n \in \IN}$ induces a functor
    \begin{equation*}
      F \colon \calo^G(E, (S_n, d_{S_n})_{n \in \IN})
       \to \calo^G(E, (X_n, d_{X_n})_{n \in \IN}).
    \end{equation*}
    For all $k$ we have $q_k \circ F = p_k$.  
  \end{proposition}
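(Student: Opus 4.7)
The plan is to verify two things: the componentwise pushforwards $\{(f_n)_*\}_{n \in \IN}$ assemble to a functor between the two product categories, and the identity $q_k \circ F = p_k$ holds for every $k$. The second claim is immediate from the explicit formula for $f_n$: the first $G$-coordinate of $f_n(a, g\ol{H})$ is $a$, which is precisely $p_k(a, g\ol{H})$, and $q_k$ is also induced by projection onto this first coordinate. For the first claim, all conditions on morphisms other than metric control over the $Z$-variable pass through pushforward by $f_n$ without incident, because $f_n$ is $G$-equivariant and injective and acts only on the $Z$-coordinate. The task therefore reduces to establishing a uniform control estimate: if $\psi = (\psi_n)_{n \in \IN}$ is $R$-controlled in $\calo^G(E, (S_n, d_{S_n})_{n \in \IN})$, I must exhibit an $R' = R'(R)$ independent of $n$ such that each $(f_n)_* \psi_n$ is $R'$-controlled.

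To set up the estimate, suppose $\psi_n$ is nonzero at $y_1 = (a, g\ol{H})$ and $y_2 = (a', g'\ol{H})$. Finiteness of $d_{S_n}(y_1, y_2)$ forces $g\ol{H} = g'\ol{H}$, so one may write $y_2 = (a', g\ol{H})$ and one has $d_G(a, a') \leq R$. Then
\begin{equation*}
  f_n(y_1) = (a, [g, f_H(g^{-1}a)]), \qquad f_n(y_2) = (a', [g, f_H(g^{-1}a')])
\end{equation*}
lie in the same sheet of $\ind_{\ol{H}}^G E_H$, so
\begin{equation*}
  d_{X_n}(f_n(y_1), f_n(y_2)) = d_G(a, a') + n \cdot d^1_{E_H}(f_H(g^{-1}a), f_H(g^{-1}a')).
\end{equation*}
For $n > R$, left-invariance of the word metric gives $d_G(g^{-1}a, g^{-1}a') \leq R < n$, and the Farrell-Hsiang condition from Definition~\ref{def:Farrell-Hsiang} then yields $d^1_{E_H}(f_H(g^{-1}a),f_H(g^{-1}a')) < 1/n$, so the total distance is strictly below $R + 1$. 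The precise cancellation between the factor $n$ in $d_{X_n}$ and the $1/n$ decay in the Farrell-Hsiang condition is the point of the whole setup.

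It remains to control the distortion of $f_n$ for the finitely many indices $n \leq R$. Each $\calh_n$ is finite, word-metric balls in $G$ are finite, and $\ol{H} \leq G$ has finite index; consequently $\{(u, u') \in G \x G : d_G(u, u') \leq R\}$ has finitely many orbits under the diagonal $\ol{H}$-action, so the $\ol{H}$-invariant map $(u, u') \mapsto d^1_{E_H}(f_H(u), f_H(u'))$ attains a finite maximum $D_n(R)$ over all $H \in \calh_n$. Setting $R' := \max\bigl(R + 1,\, \max_{n \leq R} (R + n \cdot D_n(R))\bigr)$ yields the required uniform bound, completing the verification. I do not anticipate any deeper obstacle: the only substantive input is the Farrell-Hsiang estimate, which is used precisely in the large-$n$ regime, while the small-$n$ regime is handled by a routine compactness argument.
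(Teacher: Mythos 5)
Your proof is correct and follows essentially the same route as the paper: the identity $q_k\circ F=p_k$ is read off from the formula for $f_n$, the large-$n$ case uses the same $n\cdot\tfrac1n$ cancellation coming from the Farrell--Hsiang contracting property, and your small-$n$ bound via finitely many $\ol H$-orbits of pairs within distance $R$ is just a repackaging of the paper's cofiniteness/finite-ball argument giving the finite set $D_r$.
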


  \begin{proof}
    We need to show that the sequence $(f_n)_{n \in \IN}$
    is compatible with the metric control conditions
    for the sequences of quasi-metrics $(d_{S_n})_{n \in \IN}$
    and $(d_{X_n})_{n \in \IN}$ more precisely we need to show
    that for any $r \in (0,\infty)$ there is $R \in (0,\infty)$ such that
    for all $n$ and $s ,s' \in S_n$ the implication
    \begin{equation} \label{eq:r-R}
      d_{S_n}(s,s') < r \quad \implies \quad 
       d_{X_n}(f_n(s), f_n(s')) < R
    \end{equation}
    holds.
    
    Let $r \in (0,\infty)$ be given.
    The $G$-action on $S_n$ is cofinite, the quasi-metrics
    $d_{S_n}$ and $d_{G_n}$ are $G$-invariant and $f_n$ is $G$-equivariant.
    For each $s \in S_n$ there are only finitely many $s' \in S_n$
    such that $d_{S_n}(s',s) < r$, because the word metric $d_G$ has this 
    property on $G$.
    This implies that 
    $D_r := \{ d_{X_n}(f_n(s), f_n(s')) \mid n < r, s,s' \in S_n, 
                                                d_{S_n}(s,s') < r \}$
    is a finite set.
    We can therefore define $R := 1 + r + \max D_r$.
    We claim that then~\eqref{eq:r-R} holds for all $n$ and all
    $s, s' \in S_n$.
    If $n < r$, then this is clear from the definition of $R$.
    Let $n > r$ and $s,s' \in S_n$ with $d_{S_n}(s,s') < r$.
    Write $s = (a, g\ol{H})$ and $s'= (a',g'\ol{H'})$
    with $H, H' \in \calh_n$, $a,a',g,g' \in G$.
    Since $d_{S_n}(s,s') < r < \infty$ it follows from the
    definition of $d_{S_n}$ that $H = H'$, $g \ol{H} = g' \ol{H'}$
    and $d_G(a,a') < r < n$.
    Since $d_G$ is $G$-invariant we also have
    $d_G(g^{-1}a,g^{-1}a') < n$.
    We conclude from the crucial contracting property of $f_H$
    that $d^1_{E_H}(f_H(g^{-1}a),f_H(g^{-1}a')) < \frac{1}{n}$. 
    Since $s = (a,g\ol{H})$, $s' = (a',g\ol{H})$ we have
    $f_n(s) = (a,g,f_H(g^{-1}a))$, $f_n(s') = (a',g,f_H(g^{-1}a'))$.
    Thus
    \begin{eqnarray*}
      d_{X_n}(f_n(s),f_n(s')) 
       & = & d_{X_n}((a,g,f_H(g^{-1}a)),(a',g,f_H(g^{-1}a'))) \\
       & = & d_G(a,a') + n \cdot
          d^1_{\ind_G^{\ol{H}} E_H}( (g,f_H(g^{-1}a)),(g,f_H(g^{-1}a'))) \\
       & = & d_G(a,a') + n \cdot d^1_{E_H}(f_H(g^{-1}a),f_H(g^{-1}a')) \\
       & < & r + n \cdot \frac{1}{n} = r + 1 < R. 
    \end{eqnarray*}
    This proves our claim.
    Thus $(f_n)_{n \in \IN}$ induces a functor $F$.

    For the canonical projections  $p_k \colon S_k  \to G$ and
    $q_k \colon X_k \to G$ we have $q_k \circ f_k = p_k$.
    This implies that $Q_k \circ F = P_k$.      
  \end{proof}


  \appendix

  \section{Applications and examples of 
                    Farrell-Hsiang groups}
  
  Proofs of the Farrell-Jones Conjecture often combine methods 
  from controlled topology (for example our Theorem~\ref{the:FH_implies_FJ})
  with group theoretic and geometric considerations (for example to show
  that certain groups are Farrell-Hsiang groups with respect to
  some family $\calf$) and an induction using the transitivity 
  principle~\cite[Theorem~A.10]{Farrell-Jones(1993a)}.
  The transitivity principle asserts that for families of groups 
  $\calf \subseteq \calg$ the Farrell-Jones Conjecture for $G$
  holds relative to $\calf$ provided a) the Farrell-Jones Conjecture
  for $G$ holds relative to $\calg$ and b) for any $H \in \calg$
  the Farrell-Jones Conjecture holds relative to $\calf$.
  In the following we briefly discuss some results 
  from~\cite{Bartels-Farrell-Lueck(2011)} and their
  connection to Farrell-Hsiang groups.
  
  Many crystallographic groups are Farrell-Hsiang groups
  relative to interesting families of subgroups, 
  see~\cite[Proofs of Lemma~2.8, Lemma~2.15,
    Theorem~2.1]{Bartels-Farrell-Lueck(2011)}.
  For example  $\IZ^2 \rtimes_{-\id} \IZ/2$ 
  is a Farrell-Hsiang group relative to $\VCyc$.   
  In combination with the transitivity principle this yields a 
  proof of the Farrell-Jones Conjecture  with additive categories as coefficients
  for virtually finitely generated   abelian groups.
  This generalizes~\cite{Quinn(2005)} where only untwisted ring as coefficients are treated.
  (The version with additive categories as coefficients has better inheritance
   and transitivity properties and encompasses the so called fibered version).

 The main motivation for this paper is that its methods apply to situations,
  where the known techniques for 
  virtually abelian groups do not work anymore.
  Namely, special affine groups are Farrell-Hsiang groups relative to
  the family of virtually finitely generated abelian groups,
  see~\cite[Proof of Proposition~3.40]{Bartels-Farrell-Lueck(2011)}. 
  This fact is a key ingredient for the proof of the 
  Farrell-Jones Conjecture  with additive categories as coefficients  
  for virtually poly-cyclic groups and finally 
  for cocompact lattices in virtually connected Lie groups
  in~\cite{Bartels-Farrell-Lueck(2011)}. 

  In summary, our axiomatic treatment of the Farrell-Hsiang method
  in Theorem~\ref{the:FH_implies_FJ} encapsulates completely the input
  of controlled topology to~\cite{Bartels-Farrell-Lueck(2011)},
  separates it from the necessary group theoretic and geometric
  arguments carried out there, and applies for instance 
  to special affine groups.


\def\cprime{$'$} \def\polhk#1{\setbox0=\hbox{#1}{\ooalign{\hidewidth
  \lower1.5ex\hbox{`}\hidewidth\crcr\unhbox0}}}

  \addcontentsline{toc<<}{section}{References} 

\end{document}